\documentclass{amsart}

\usepackage[english]{babel}
\usepackage[utf8]{inputenc}

\usepackage{graphicx}
\usepackage{color}
\usepackage{caption}
\usepackage{subcaption}

\usepackage{amsmath}
\usepackage{amsthm}
\usepackage{amssymb}
\usepackage{algorithm}

\usepackage[noend]{algpseudocode}
\usepackage{enumitem}
\usepackage{diagbox}

\usepackage{mathabx}
\usepackage{tikz}
\usetikzlibrary{graphs}
\usepackage{tkz-graph}
\usetikzlibrary{fit}
\usepackage{makecell}

\usepackage{hyperref}
\newtheorem{theorem}{Theorem}[section]
\newtheorem{lemma}[theorem]{Lemma}
\newtheorem{corollary}[theorem]{Corollary}
\newtheorem{proposition}[theorem]{Proposition}
\newtheorem{example}{Example}

\theoremstyle{definition}

\theoremstyle{remark}

\DeclareMathOperator*{\argmax}{argmax\,}

\numberwithin{equation}{section}

\newcommand{\abs}[1]{\lvert#1\rvert}


\setlength{\textwidth}{\paperwidth}
\addtolength{\textwidth}{-2.2in}
\calclayout

\title[Orthogonal approximations to symmetric tensors]{Optimal orthogonal approximations to symmetric tensors cannot always be chosen symmetric}

\author{Oscar Mickelin}
\address{Department of Mathematics, Massachusetts Institute of Technology, Massachusetts, USA}
\email{oscarmi@mit.edu}

\author{Sertac Karaman}
\address{Department of Aeronautics and Astronautics, Massachusetts Institute of Technology, Massachusetts, USA}
\email{sertac@mit.edu}

\begin{document}
\keywords{Symmetric tensors, tensor approximations, orthogonal tensor approximations.}
\subjclass[2010]{Primary 15A18, 15A69, 41A29}

\begin{abstract}
We study the problem of finding orthogonal low-rank approximations of symmetric tensors. In the case of matrices, the approximation is a truncated singular value decomposition which is then symmetric. Moreover, for rank-one approximations of tensors of any dimension, a classical result proven by Banach in 1938 shows that the optimal approximation can always be chosen to be symmetric. In contrast to these results, this article shows that the corresponding statement is no longer true for orthogonal approximations of higher rank. Specifically, for any of the four common notions of tensor orthogonality used in the literature, we show that optimal orthogonal approximations of rank greater than one cannot always be chosen to be symmetric.
\end{abstract}




\maketitle

\section{Introduction}
Given a tensor $T \in \mathbb{R}^{n_1 \times \ldots \times n_d}$ or $T \in \mathbb{C}^{n_1 \times \ldots \times n_d}$, it is a well-studied problem to search for a compressed approximation of $T$. Representing the approximation using the canonical decomposition, computing a low-rank (and therefore low-storage) approximation of the form $\sum_{k=1}^r \sigma_k \bigotimes_{j=1}^d  v_{kj}$ is a classical problem with great practical interest \cite{kolda2009tensor}.

It is well-known that the optimal rank-$r$ approximation problem in $\mathbb{F} = \mathbb{R}$ or $\mathbb{F} = \mathbb{C}$
\begin{equation}\label{eq:intro1}
\inf_{v_{kj}} \left\{ \| T - \sum_{k=1}^r \sigma_k \bigotimes_{j=1}^d  v_{kj} \| : v_{kj} \in \mathbb{F}^{n_j}, \sigma_k \in \mathbb{F}\right\},
\end{equation}
is in general ill-posed for $r > 1$ since the set of tensors of rank at most $r$ is not necessarily closed \cite{de2008tensor}. For $r=1$, the rank-one approximation problem is well-posed \cite{hackbusch2012tensor}, but in general NP-hard to solve for any $d \geq 3$ \cite{hillar2013most}. Nonetheless, in practical computations, suboptimal approximations are often good enough, and a variety of methods exist to compute rank-$r$ approximations for any $r \geq 1$; we refer to the review article by Kolda and Bader \cite{kolda2009tensor} for an overview of these methods and a longer discussion.

The nature of the approximation problem changes when imposing orthogonality conditions on the vectors $v_{kj}$. There are a number of natural notions of orthogonality of rank-one tensors in the literature, and we recall their definitions in Section~\ref{sec:notanddef}. When restricting the families $v_{kj}$ to be orthogonal under any of these notions, the corresponding optimal orthogonal rank-$r$ approximation problem in Equation~\eqref{eq:intro1} has been shown to be well-posed for any $r \geq 1$ \cite{chen2009tensor,sorensen2012canonical,wang2015orthogonal}.

In this article, we will consider the orthogonal approximation problem under the additional assumption that $T$ is a symmetric tensor. When $T$ is symmetric, it is in many applications natural to look also for symmetric approximations of $T$, for instance when attempting to recover a symmetric tensor corrupted by noise in independent component analysis \cite{comon1994independent} or latent variable models \cite{anandkumar2014tensor,mu2015successive,mu2017greedy}. In the matrix case $d=2$, the truncated singular value decomposition shows that an optimal approximation can always be chosen symmetric. Moreover, when $r=1$ and $T$ is a symmetric tensor, a classical result proven by Banach in 1938 \cite{banach1938homogene} and also rediscovered recently \cite{zhang2012cubic,zhang2012best,friedland2013best} shows that the optimizer of Equation~\eqref{eq:intro1} can be chosen to be symmetric, i.e., $v_{k1} = v_{k2} = \ldots = v_{kd}$, for all terms $k$. However, for $r>1$, it is not known in general if an optimal orthogonal approximation to $T$ can necessarily be chosen to be symmetric. This is both of theoretical and practical interest, since a symmetric approximation has fewer degrees of freedom and the optimization problem in Equation~\eqref{eq:intro1} can therefore typically be accelerated and more easily analyzed. Furthermore, computations with symmetric tensors are known to admit algorithms with beneficial stability properties and speed \cite{solomonik2016contracting,schatz2014exploiting}.
This article shows that optimal orthogonal approximations to symmetric tensors cannot be chosen symmetric, in general, for any of the notions of orthogonality used in the literature.

\subsection{Related work}
Numerical algorithms for computing orthogonal approximations have received substantial attention in recent years. This has produced several classes of approximation algorithms for general tensors as well as improved algorithms for special cases of tensors satisfying certain structural assumptions. Nie and Wang \cite{nie2014semidefinite} phrased the rank-one approximation problem as a semidefinite optimization problem which can be solved for tensors of moderate size. Friedland and Wang \cite{friedland2018spectral} presented an alternative way of computing the best rank-one approximation of a symmetric tensor, by finding the fixed points of the associated polynomial map. For higher-rank orthogonal approximations, the literature has used non-linear optimization techniques to produce algorithms converging to local minima of Equation~\eqref{eq:intro1}. Chen and Saad \cite{chen2009tensor} introduced a higher-order power method for computing completely orthogonal approximations, with convergence guarantees to a local minimum. S{\o}rensen et al. \cite{sorensen2012canonical} considered alternating least-squares based algorithms for partially orthogonal approximations, and Wang, Chu and Yu \cite{wang2015orthogonal} presented a higher-order power method for the same purpose, also proving convergence to a local minimum.

The convergence guarantees of the orthogonal approximation problem have been improved by imposing structural assumptions on the tensors under consideration. An important special case that guarantees also global convergence is when $T$ is completely orthogonally decomposable, i.e., when $T$ has an exact orthogonal (but unknown) decomposition $T = \sum_{k=1}^r \bigotimes_{j=1}^d v_{kj}$ with $v_{kj} \perp v_{k'j}$ for all $k \neq k'$. Optimal rank-$r$ orthogonal decompositions for these tensors can be computed by successively computing the optimal rank-one approximations, subtracting these from $T$ and iterating, in a deflation procedure \cite{zhang2001rank}. An optimal rank-one approximation of orthogonally decomposable tensors can be computed efficiently by the tensor power iteration method \cite{anandkumar2014tensor} and each term corresponds to a singular vector of the original tensor. This method has also been extended to nearly completely orthogonally decomposable tensors \cite{anandkumar2014tensor,mu2015successive,mu2017greedy}.

Lastly, the results of this article are related to (but distinct from) Comon's conjecture \cite{comon2008symmetric}, which asks if the rank and symmetric rank of a symmetric tensor coincide, i.e., if
\begin{equation}\label{eq:comon}
\min_r \left\{r: T = \sum_{k=1}^r \sigma_k \bigotimes_{j=1}^d v_{kj} \right\} = \min_r \left\{r: T = \sum_{k=1}^r \sigma_k \underbrace{v_k\otimes \ldots \otimes v_k}_{d} \right\},
\end{equation}
whenever $T$ is a symmetric tensor. This conjecture has been proven to be true in many special cases \cite{zhang2016comon,friedland2016remarks}, but is now known to not be true in general \cite{shitov2018counterexample}. The setting of this article can be seen as an extension of Comon's conjecture to the case of orthogonal approximations of a tensor. This extension differs from Comon's conjecture in two ways. Firstly, we consider approximations in Equation~\eqref{eq:intro1}, rather than the exact decompositions in Equation~\eqref{eq:comon}. Secondly, we impose orthogonality constraints on the terms $v_{kj}$ in Equation~\eqref{eq:intro1}. This drastically changes the nature of the problem, since for instance the approximation problem is ill-posed without orthogonality constraints and well-posed with orthogonality constraints. The status of Comon's conjecture therefore does not have any direct bearing on the extension we consider. Our results show that this extension is not true in general, for any of the common notions of orthogonality of tensors considered in the literature.

\subsection{Contributions}
We treat a number of theoretical and practical questions concerning the optimal orthogonal rank-$r$ approximation problem for symmetric tensors. Our main contributions treat the symmetry of orthogonal optimizers of Equation~\eqref{eq:intro1}. We show that the optimizer of Equation~\eqref{eq:intro1} cannot in general be chosen symmetric, for any of the notions of orthogonality that appear in the literature (see Section~\ref{sec:notanddef} for a definition of these different notions). However, we show that the optimal completely orthogonal rank-$r$ approximation of a symmetric tensor $T$ can be chosen to be symmetric when $n_1= \ldots = n_d = 2$, and that a stronger condition than complete orthogonality results in symmetric optimizers. We also show that the optimal partially orthogonal rank-$r$ approximation can be chosen to have terms that are separately symmetric under permutations of two disjoint partitions of the tensor dimensions when $n_1= \ldots = n_d = 2$. However, the optimizer cannot be taken symmetric in all tensor dimensions, in general. We also prove a pair of results on the structure of symmetric orthogonal, strongly orthogonal and partially orthogonal tensors, which may be of independent interest.

Along the way, we also present a number of additional ways in which the orthogonal approximation problem differs from the matrix case, and rank-one approximation of general tensors. Firstly, we show that the optimal completely orthogonal rank-$r$ approximation of $T$ \emph{cannot} in general be computed by successive deflations using the optimal rank-one approximations, for non-orthogonally decomposable tensors. This is an analogue of a result in \cite{stegeman2010subtracting} to the setting of orthogonal decompositions. We also provide examples that show that the terms in the optimal orthogonal rank-$r$ approximation are not necessarily the tensor singular vectors for $r \geq 2$. Secondly, we provide examples of symmetric tensors $T$ for which the optimal completely orthogonal rank-$2$ approximation coincides with the optimal rank-$3$ approximation, but without being equal to $T$. This is a situation that does not occur in the matrix case. We conclude by showing that it is in general NP-hard to calculate the optimal orthogonal, strongly orthogonal, partially orthogonal, and completely orthogonal rank-$r$ approximations of a tensor (symmetric or not), for any $r \geq 1$. This result is known for $r=1$ \cite{hillar2013most}, but the case $r>1$ has not appeared in the literature, to the best of our knowledge.

The remainder of the article is structured as follows. Section~\ref{sec:notanddef} presents our notation, followed by a few auxiliary results in section~\ref{sec:auxiliary}. Section~\ref{sec:mainresults} contains our main results on the existence of symmetric optimal approximations. Lastly, section~\ref{sec:NP} concludes with a short result on NP-hardness of orthogonal approximations for any $r$.
\section{Notation and definitions}\label{sec:notanddef}
We will state our definitions and results in terms of a base field $\mathbb{F}$, which we will exclusively take to be either $\mathbb{R}$ or $\mathbb{C}$. Scalars will therefore be taken from $\mathbb{F}$ and tensors from $\mathbb{F}^{n_1 \times \ldots \times n_d}$. In contrast to the matrix case $d=2$, notions such as rank are dependent on the choice of base field, meaning that e.g., a tensor with real-valued entries can have different ranks over $\mathbb{R}$ and $\mathbb{C}$ \cite{kolda2009tensor}. We choose to emphasize this in our notation by including the subscript $\mathbb{F}$ wherever the result depends on the base field.

The $k$th standard basis vector will be denoted by $e_k$. The Kronecker delta will be denoted by $\delta_{k,k'}$. We will write the $k$-fold tensor power of a vector $v\in \mathbb{F}^n$ by $v^{\otimes k} := \underbrace{v\otimes \ldots \otimes v}_{k}$. For tensors $S$ and $T$ in $\mathbb{F}^{n_1 \times \ldots \times n_d}$, we define the Frobenius (or Hilbert-Schmidt) inner product and norm by
\begin{equation}
\langle T, S\rangle := \sum_{i_1, \ldots , i_d=1}^{n_1, \ldots, n_d} T (i_1, \ldots, i_d) \overline{S (i_1, \ldots, i_d}),
\end{equation}
and $\|T\| := \sqrt{\langle T, T\rangle}$. Given a tensor $T$ in $\mathbb{F}^{n_1 \times \ldots \times n_d}$ and a matrix $A$ in $\mathbb{F}^{m \times n_k}$, we define the $k$-mode contraction of $T$ by $A$ as a tensor $T\times_k A \in \mathbb{F}^{n_1 \times \ldots \times n_{k-1}\times m \times n_{k+1} \times \ldots \times n_d}$, with
\begin{equation}
(T\times_k A)(i_1, \ldots , i_d) = \sum_{j_k = 1}^{n_k} T(i_1, \ldots , i_{k-1}, j_k, i_{k+a}, \ldots , i_k)A(i_k, j_k).
\end{equation}
In particular, when $v\in \mathbb{F}^{n_k}$ is a vector, we view the resulting tensor $T\times_k v$ as an element of the space $\mathbb{F}^{n_1\times \ldots \times n_{k-1} \times n_{k+1} \times \ldots \times n_d}$, by omitting the singleton dimension.

The group of all permutations on $d$ elements will be denoted by $S^d$. A tensor $T \in \mathbb{F}^{n\times \ldots \times n}$ is called symmetric if $T (i_1, \ldots , i_d) = T (i_{\varphi(1)}, \ldots , i_{\varphi(d)})$ for all permutations $\varphi \in S^d$, and we denote the set of all symmetric tensors by $S^d(\mathbb{F}^{n})$.

The spectral norm of a tensor (also known as the injective norm; see e.g., \cite{hackbusch2012tensor}) is defined as
\begin{equation}\label{eq:spectraldef}
\|T\|_{\sigma, \mathbb{F}} := \sup_{x_k} \Big\{\abs{\langle T, x_1\otimes \ldots \otimes x_d\rangle} : x_k\in \mathbb{F}^{n_k} , \|x_k\| = 1 \Big\}.
\end{equation}
The dual of the spectral norm is the tensor nuclear norm (also known as the projective norm \cite{hackbusch2012tensor}), defined by
\begin{equation}\label{eq:nucleardef}
\| T\|_{*, \mathbb{F}} := \inf_{r,\sigma_k, v_{kj}} \left\{ \sum_{k=1}^r \abs{\sigma_k} : T = \sum_{k=1}^r \sigma_k v_{k1}\otimes \ldots \otimes v_{kd}, \|v_{kj}\| = 1, r\in \mathbb{N} \right\}.
\end{equation}
For a symmetric tensor $T$, both the spectral and nuclear norm of $T$ are achieved for symmetric maximizers, i.e., for $x_1 = x_2 = \ldots = x_d$ in Eq.~\eqref{eq:spectraldef} and $v_{k1} = v_{k2} = \ldots = v_{kd}$ in Eq.~\eqref{eq:nucleardef} \cite{banach1938homogene,friedland2018nuclear}.

For tensors in dimensions $d \geq 2$, we will use several different notions of orthogonality of two rank-one tensors. The following definitions were introduced by Kolda \cite{kolda2001orthogonal}. Let $x = x_1 \otimes \ldots \otimes x_d$ and $y = y_1 \otimes \ldots \otimes y_d$ be two tensors of rank $1$. We will say that $x$ and $y$ are 
\begin{itemize}
\item \emph{orthogonal} ($x\perp y$) if $\langle x, y \rangle = \langle x_1, y_1\rangle \cdot \ldots \cdot \langle x_d, y_d \rangle = 0$.
\item \emph{strongly orthogonal} ($x \perp_s y$) if $x \perp y$ and if for each $j = 1, \ldots , d$, either $x_{j} \perp y_{j}$ or $x_{j} = \mu_j y_{j}$ for some non-zero $0 \neq \mu_j \in \mathbb{F}$.
\item \emph{completely orthogonal} ($x\perp_c y$) if $x_{j} \perp y_{j}$ for all $j = 1, \ldots , d$. 
\end{itemize}
It is clear that complete orthogonality implies strong orthogonality, which in turn implies orthogonality. Let $ P \subseteq \{1, \ldots , d\}$ be a non-empty subset of tensor dimensions. We will also say that $x$ and $y$ are \emph{$P$-partially orthogonal} ($x \perp_P y$) if $\langle x_j, y_j\rangle = 0$ for each $j \in P$. The case when $\abs{P} = 1$ is also known as semiorthogonality in the literature \cite{sorensen2012canonical,wang2015orthogonal}.

Let now $T$ be a given tensor. For each notion of orthogonality, we will be interested in a decomposition of the form
\begin{equation}\label{eq:orth_decomp}
T = \sum_{k=1}^r \sigma_i  v_{k1}\otimes \ldots \otimes v_{kd}.
\end{equation}
A decomposition as in Equation~\eqref{eq:orth_decomp} is called an \emph{orthogonal, strongly orthogonal, completely orthogonal or partially orthogonal decomposition of $T$ with rank at most $r$} if for each pair $k\neq k'$, the terms $v_{k1}\otimes \ldots \otimes v_{kd}$ and $v_{k'1}\otimes \ldots \otimes v_{k'd}$ are orthogonal, strongly orthogonal, completely orthogonal or partially orthogonal, respectively. The set of all orthogonal, strongly orthogonal, completely orthogonal and partially orthogonal decomposition tensors of rank at most $r$ will be denoted by $\mathcal{ON}_r, \mathcal{SON}_r$, $\mathcal{CON}_r$, $\mathcal{PCON}_{r,P}$, respectively. The dimensions of the tensors will be clear from the context and therefore omitted from the notation. By a slight abuse of notation, we will write both $T \in \mathcal{ON}_r$ and $\{ v_{kj} \}_{k,j} \in \mathcal{ON}_r$ and likewise for $\mathcal{SON}_r, \mathcal{CON}_r$, and $\mathcal{PCON}_{r,P}$.

Note that by fixing an orthogonal basis $\{ v_{kj}\}_{k=1}^{n_j}$ of $\mathbb{F}^{n_j}$ for each $j$ shows that strongly orthogonal and orthogonal decompositions exist for any tensor, and for the matrix case $d=2$, the singular value decomposition guarantees the existence of a completely orthogonal decomposition of any matrix $M$. Importantly, however, for $d>2$, the existence of a completely orthogonal decomposition is a very special property and is not guaranteed for all tensors $T$ \cite{kolda2001orthogonal,zhang2001rank}.

\section{Orthogonal tensor approximations}\label{sec:auxiliary}
This section presents a few auxiliary results that are used in the remainder of the article. For a given tensor $T$ in $\mathbb{F}^{n_1 \times \ldots \times n_d}$, we consider the four problems of finding the optimal  orthogonal, strongly orthogonal, completely orthogonal and $P$-partially orthogonal rank-$r$ approximations $Y = \sum_{k=1}^r \sigma_k \bigotimes_{j=1}^d v_{kj}$ to $T$, i.e., of calculating
\begin{equation}
\min_{v_{kj}} \left\{ \| T - \sum_{k=1}^r \sigma_k \bigotimes_{j=1}^d v_{kj}\| :  \{v_{kj}\}_{k,j} \in \mathcal{A}_r, \|v_{kj}\| = 1 \right\}, 
\end{equation}
where $\mathcal{A}_r = \mathcal{ON}_r$, $ \mathcal{SON}_r$, $ \mathcal{CON}_r$ or $ \mathcal{PCON}_{r,P}$. For $d = 2$ or for completely orthogonally decomposable tensors, these three problems coincide, but they are in general distinct for tensors which are not completely orthogonally decomposable in $d > 2$. For the case of completely orthogonal tensors, Chen and Saad showed in \cite{chen2009tensor} that this problem is equivalent to 
\begin{equation}\label{eq:max1}
\max_{v_{kj}} \left\{ \sum_{k=1}^r\abs{ \langle T, \bigotimes_{j=1}^d v_{kj} \rangle}^2:  \{v_{kj}\}_{k,j} \in \mathcal{A}_r, \|v_{kj}\| = 1  \right\},
\end{equation}
and the proof carries through also to the other notions of orthogonality. In stark contrast to the case without any orthogonality assumptions on the $v_{kj}$, the domain of the problem is compact, so the maximum is in fact achieved and the problem is well-posed for any $r \geq 1$, although the maximizer is not necessarily unique (even for $d=2$).

For the discussion in this article, we will also frequently use the following alternative characterization of Equation~\eqref{eq:max1}.
\begin{proposition}\label{prop:max2}
For $\mathcal{A}_r = \mathcal{ON}_r$, $ \mathcal{SON}_r$, $ \mathcal{CON}_r$ or $ \mathcal{PCON}_{r,P}$, we have

\begin{equation}\label{eq:max2}
\begin{split}
\max_{v_{kj}} \left\{ \sum_{k=1}^r\abs{ \langle T, \bigotimes_{j=1}^d v_{kj} \rangle}^2:  \{v_{kj}\}_{k,j} \in \mathcal{A}_r, \|v_{kj}\| = 1  \right\} \\
= \left(\max_Y \left\{ \abs{\langle T, Y \rangle}:  Y \in \mathcal{A}_r, \|Y\| \leq 1  \right\} \right)^2.
\end{split}
\end{equation}

Moreover, if $Y = \sum_{k=1}^r \sigma_k \bigotimes_{j=1}^d v_{kj}$ with $\|v_{kj}\|= 1$ is a maximizer of the right hand side of Equation~\eqref{eq:max2}, then $v_{kj}$ is a maximizer of Equation~\eqref{eq:max1} and vice versa.
\end{proposition}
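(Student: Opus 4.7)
The plan is to reduce the identity to a single application of Cauchy--Schwarz, exploiting the fact that under any of the four orthogonality notions the rank-one terms $w_k := \bigotimes_{j=1}^d v_{kj}$ associated to an admissible family $\{v_{kj}\} \in \mathcal{A}_r$ form an orthonormal set in the Frobenius inner product. Since $\|v_{kj}\|=1$, each $\|w_k\|=1$; and for $k \neq k'$ the product expansion $\langle w_k, w_{k'}\rangle = \prod_{j=1}^d \langle v_{kj}, v_{k'j}\rangle$ vanishes in all four cases: immediately from the definition for $\mathcal{ON}_r$, $\mathcal{SON}_r$, $\mathcal{CON}_r$, and for $\mathcal{PCON}_{r,P}$ because $P$ is non-empty and therefore at least one factor is zero.

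First I would fix $\{v_{kj}\} \in \mathcal{A}_r$ and optimize internally over $(\sigma_k)\in \mathbb{F}^r$. Writing $Y = \sum_{k=1}^r \sigma_k w_k$, the orthonormality of $\{w_k\}$ gives $\|Y\|^2 = \sum_k |\sigma_k|^2$ and $\langle T, Y\rangle = \sum_k \overline{\sigma_k}\,\langle T, w_k\rangle$. Cauchy--Schwarz then yields
\begin{equation*}
|\langle T, Y\rangle|^2 \;\le\; \Big(\sum_{k=1}^r |\sigma_k|^2\Big)\Big(\sum_{k=1}^r |\langle T, w_k\rangle|^2\Big) \;\le\; \sum_{k=1}^r |\langle T, w_k\rangle|^2
\end{equation*}
whenever $\|Y\|\leq 1$, with equality iff $(\sigma_k)$ is proportional to $(\langle T, w_k\rangle)$ and $\|Y\|=1$. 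Taking the supremum of both sides over $\{v_{kj}\}\in \mathcal{A}_r$ and squaring yields the $\leq$ direction of \eqref{eq:max2}.

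For the reverse direction and the "moreover" statement I would exploit the Cauchy--Schwarz equality case constructively. Given any maximizer $\{v_{kj}\}$ of \eqref{eq:max1}, setting $\sigma_k := \langle T, w_k\rangle/\sqrt{\sum_\ell |\langle T, w_\ell\rangle|^2}$ (or $\sigma_k = 0$ in the degenerate case) produces a feasible $Y \in \mathcal{A}_r$ with $\|Y\|=1$ such that $|\langle T, Y\rangle|^2$ equals the left-hand side of \eqref{eq:max2}, so the right-hand side is at least as large. Conversely, any maximizer $Y = \sum_k \sigma_k w_k$ of the right-hand side must attain the Cauchy--Schwarz bound above, so $|\langle T, Y\rangle|^2 = \sum_k |\langle T, w_k\rangle|^2$, and the underlying family $\{v_{kj}\}$ must then maximize \eqref{eq:max1}; if it did not, replacing it by a better family (with coefficients chosen as in the previous sentence) would yield a strictly larger value of the right-hand side, contradicting optimality of $Y$.

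I expect no serious obstacle; the only point requiring care is the uniform treatment of the four orthogonality notions, but this collapses to the single observation in the first paragraph that the corresponding product of inner products vanishes in every case. Once that is in hand, the whole argument is a tight back-and-forth between the internal optimization over $(\sigma_k)$ and the external optimization over $\{v_{kj}\}$.
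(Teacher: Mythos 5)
Your proposal is correct and follows essentially the same route as the paper: both reduce the problem to the observation that the rank-one terms are orthonormal in the Frobenius inner product (so $\|Y\|^2=\sum_k|\sigma_k|^2$) and then apply Cauchy--Schwarz in the coefficients $\sigma_k$, with equality when $\sigma_k$ is proportional to $\langle T,\bigotimes_j v_{kj}\rangle$. Your write-up is in fact slightly more complete than the paper's, since you explicitly verify orthonormality for all four notions (including the $\mathcal{PCON}_{r,P}$ case via non-emptiness of $P$) and spell out both directions of the equality and the ``moreover'' clause, which the paper leaves implicit.
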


\begin{proof}
Let $Y = \sum_{k=1}^r \sigma_k \bigotimes_{j=1}^d v_{kj}$ be a (strongly, completely, partially) orthogonal decomposition of $Y$ with $ \sum_{k=1}^r \abs{\sigma_k}^2 \leq1$, and $\|v_{kj}\|= 1$. Cauchy-Schwarz gives
\begin{equation}
\abs{ \langle T, Y \rangle } \leq \sum_{k=1}^r \abs{ \sigma_k \langle T, \bigotimes_{j=1}^d v_{kj}\rangle} \leq \sqrt{\sum_{k=1}^r \abs{\langle T, \bigotimes_{j=1}^d v_{kj}\rangle}^2},
\end{equation} 
and equality is achieved when the $\sigma_k$ are proportional to $ \langle T, \bigotimes_{j=1}^d v_{kj}\rangle$.
\end{proof}

The expressions
\begin{equation}
\max_Y \left\{ \abs{\langle T, Y \rangle}:  Y \in \mathcal{A}_r, \|Y\| \leq 1  \right\}
\end{equation}
clearly define four different norms, which we will denote by $\| T\|_{\mathcal{ON}_r, \mathbb{F}}$, $\| T\|_{\mathcal{SON}_r, \mathbb{F}}$, $\| T\|_{\mathcal{CON}_r, \mathbb{F}}$, and $\| T\|_{\mathcal{PCON}_{r,P}, \mathbb{F}}$, respectively. For $r =1$, all four expressions coincide with the spectral tensor norm $\|T\|_{\sigma, \mathbb{F}}$. For a tensor $T$ with real-valued entries, it is known that the value of the spectral norm depends on if the tensor is seen as having base field $\mathbb{R}$ or $\mathbb{C}$, i.e., $\|T\|_{\sigma, \mathbb{R}} \neq \|T\|_{\sigma, \mathbb{C}}$ in general. The analogous statements $\| T\|_{\mathcal{ON}_r, \mathbb{R}} \neq \| T\|_{\mathcal{ON}_r, \mathbb{C}}$, $\| T\|_{\mathcal{SON}_r, \mathbb{R}} \neq \| T\|_{\mathcal{SON}_r, \mathbb{C}}$, $\| T\|_{\mathcal{CON}_r, \mathbb{R}} \neq \| T\|_{\mathcal{CON}_r, \mathbb{C}}$, $\| T\|_{\mathcal{PCON}_{r,P}, \mathbb{R}} \neq \| T\|_{\mathcal{PCON}_{r,P}, \mathbb{C}}$, in general, are also true for any $r \geq 1$, in light of Proposition~\ref{prop:blockr} below.

For the sake of completeness, we note the following result, which is an analogue of a result for the case $r=1$ presented in \cite{lim2014blind}.
\begin{proposition}
For $\mathcal{A}_r = \mathcal{ON}_r$, $ \mathcal{SON}_r$, $ \mathcal{CON}_r$ or $ \mathcal{PCON}_{r,P}$ and any $T$ in $\mathbb{F}^{n_1 \times \ldots \times n_d}$, the following inequalities hold
\begin{align}
\|T\|_{\sigma, \mathbb{F}} = \|T\|_{\mathcal{A}_1, \mathbb{F}} &\leq \|T\|_{\mathcal{A}_2, \mathbb{F}} \leq \ldots \leq \|T\| 
\leq  \ldots \leq  \|T\|_{\mathcal{A}_1, \mathbb{F}}^* = \|T\|_{*,\mathbb{F}}, \\
 &\|T\|_{\mathcal{CON}_r,\mathbb{F}} \leq \|T\|_{\mathcal{SON}_r,\mathbb{F}} \leq \|T\|_{\mathcal{ON}_r,\mathbb{F}}. 
\end{align}
Moreover, the dual norm $\|T\|_{\mathcal{A}_r,\mathbb{F}}^*$ can be characterized as
\begin{equation}\label{eq:dualONr}
\|T\|_{\mathcal{A}_r, \mathbb{F}}^* = \inf_{N,v_k} \left\{ \sum_{k = 1}^N \|v_k\| : T = \sum_{k=1}^N v_k, v_k \in \mathcal{A}_r \right\},
\end{equation}
and for $Y$ in $\mathcal{A}_r$ with corresponding (strongly, completely, partially) orthogonal decomposition $Y = \sum_{k=1}^r \sigma_k \bigotimes_{j=1}^d v_{kj}$ where $\|v_{kj}\| = 1$, it holds that $\|Y\|_{\mathcal{A}_r,\mathbb{F}} = \|Y\|_{\mathcal{A}_r,\mathbb{F}}^* = \|Y\| = \sqrt{\sum_{k=1}^r \sigma_k^2 }$.
\end{proposition}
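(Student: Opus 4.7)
The plan is to prove the three ingredients separately: the inequality chains, the dual-norm characterization, and finally the sharp identity for $Y \in \mathcal{A}_r$. For the first inequality chain, I would begin with the equality $\|T\|_{\sigma, \mathbb{F}} = \|T\|_{\mathcal{A}_1, \mathbb{F}}$, which is immediate since at $r = 1$ the orthogonality conditions are vacuous and a unit rank-one tensor $Y = v_1 \otimes \ldots \otimes v_d$ of Frobenius norm one has the form forced by $\|v_j\| = 1$. Monotonicity $\|T\|_{\mathcal{A}_r, \mathbb{F}} \leq \|T\|_{\mathcal{A}_{r+1}, \mathbb{F}}$ follows by padding an admissible decomposition of length $r$ with a zero term, which preserves any of the four orthogonality conditions. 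The upper bound $\|T\|_{\mathcal{A}_r, \mathbb{F}} \leq \|T\|$ is Cauchy--Schwarz, $|\langle T, Y\rangle| \leq \|T\| \|Y\| \leq \|T\|$. The analogous upper chain on the dual norms reverses these inequalities by the standard duality principle, and $\|T\|_{\mathcal{A}_1, \mathbb{F}}^{*} = \|T\|_{*,\mathbb{F}}$ follows since the dual of the spectral norm is the nuclear norm. The second chain $\|T\|_{\mathcal{CON}_r, \mathbb{F}} \leq \|T\|_{\mathcal{SON}_r, \mathbb{F}} \leq \|T\|_{\mathcal{ON}_r, \mathbb{F}}$ is a consequence of the inclusions $\mathcal{CON}_r \subseteq \mathcal{SON}_r \subseteq \mathcal{ON}_r$, which enlarge the feasible set over which the supremum is taken.

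For the dual-norm formula~\eqref{eq:dualONr}, I would first observe that each of $\mathcal{ON}_r, \mathcal{SON}_r, \mathcal{CON}_r, \mathcal{PCON}_{r,P}$ is closed under multiplication by scalars, since rescaling the coefficients $\sigma_k$ in a decomposition preserves all four orthogonality conditions. Consequently, $\|T\|_{\mathcal{A}_r, \mathbb{F}}$ coincides with the support functional of the bounded and balanced set $B_r := \{Y \in \mathcal{A}_r : \|Y\| \leq 1\}$. By standard convex duality, the dual norm equals the Minkowski functional of $\overline{\mathrm{conv}}(B_r)$: an element $T$ satisfies $\|T\|_{\mathcal{A}_r, \mathbb{F}}^* \leq c$ iff $T$ admits a representation $T = \sum_k c_k Y_k$ with $Y_k \in B_r$ and $\sum_k |c_k| \leq c$. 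Absorbing the scalars into the terms and using closedness under scalar multiplication rephrases this as $T = \sum_k v_k$ with $v_k \in \mathcal{A}_r$ and $\sum_k \|v_k\| \leq c$, which is exactly the right-hand side of Equation~\eqref{eq:dualONr}.

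For the final identity, fix $Y = \sum_{k=1}^r \sigma_k \bigotimes_{j=1}^d v_{kj} \in \mathcal{A}_r$ with $\|v_{kj}\|=1$. Each pair of terms is orthogonal in the Frobenius inner product (under any of the four notions), so the Pythagorean theorem yields $\|Y\|^2 = \sum_{k=1}^r |\sigma_k|^2$. The identity $\|Y\|_{\mathcal{A}_r, \mathbb{F}} = \|Y\|$ then follows from the upper bound by Cauchy--Schwarz and the matching lower bound obtained by testing against $Y / \|Y\| \in B_r$. For the dual norm, the characterization~\eqref{eq:dualONr} applied with $N=1$ and $v_1 = Y$ gives $\|Y\|_{\mathcal{A}_r, \mathbb{F}}^* \leq \|Y\|$, and the reverse inequality comes from the generalized Cauchy--Schwarz inequality $\|Y\|^2 = \langle Y, Y\rangle \leq \|Y\|_{\mathcal{A}_r, \mathbb{F}} \cdot \|Y\|_{\mathcal{A}_r, \mathbb{F}}^*$ combined with the identity just established.

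The main obstacle is the dual-norm step: while the argument is conceptually standard, one has to verify carefully that $\mathcal{A}_r$ is a cone (closed under scalar multiplication) for each of the four notions of orthogonality, and that the infimum in~\eqref{eq:dualONr} over unbounded $N$ matches the Minkowski functional of the closed convex hull $\overline{\mathrm{conv}}(B_r)$. Once these points are in place, the remaining verifications reduce to Cauchy--Schwarz, Pythagoras, and the well-known duality between spectral and nuclear norms recalled in the Introduction.
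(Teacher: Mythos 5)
Your proposal is correct and follows essentially the same route as the paper: nested feasible sets and Cauchy--Schwarz for the inequality chains, duality with the self-dual Frobenius norm for the upper chain, and the standard atomic-norm/bipolar argument for the dual characterization and the identity on $\mathcal{A}_r$. The only difference is that the paper compresses the last two parts into a citation of the $r=1$ case in Lim--Comon (Lemma 21), whereas you spell out the convex-duality and Pythagoras details explicitly (correctly noting that all four orthogonality notions force Frobenius-orthogonal terms).
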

\begin{proof}
The statements $\|T\|_{\mathcal{A}_k, \mathbb{F}} \leq \|T\|_{\mathcal{A}_{k+1}, \mathbb{F}}$ are clear by definition. To show that $\|T\|_{\mathcal{A}_k, \mathbb{F}} \leq \|T\|$, note that $\abs{\langle T, Y \rangle} \leq \|T\|$ for $\|Y\| \leq 1$, using Cauchy-Schwarz. The remaining inequalities follow by duality and the fact that $\|\cdot\|$ is self-dual. The second set of inequalities is clear from their definitions.

The remaining statements can be proven by exactly the same argument as in the case $r=1$ in \cite[Lemma 21]{lim2014blind}.
\end{proof}

\section{Symmetric approximations to symmetric tensors}\label{sec:mainresults}
This section contains our main results. For the remainder of the section, we let $T$ be a symmetric tensor in $S^d(\mathbb{F}^{n})$. The following theorem was proven by Banach \cite{banach1938homogene} and also rediscovered recently \cite{zhang2012cubic,zhang2012best,friedland2013best}. It shows that the optimal rank-one approximation of $T$ can in this case be chosen symmetric.
\begin{theorem}[\cite{banach1938homogene}]\label{thm:banach}
If $T \in S^d(\mathbb{F}^{n})$ is symmetric, then
\begin{equation}
\max_{\|x_k\| \leq 1} \abs{ \langle T, x_1 \otimes \ldots \otimes x_d \rangle }= \max_{\|x\| \leq 1} \abs{ \langle T, x^{\otimes d}\rangle}
\end{equation}
\end{theorem}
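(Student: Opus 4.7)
The inequality $\geq$ is immediate, since setting $x_1 = \ldots = x_d = x$ is a feasible configuration for the left-hand supremum. The substantive content lies in the reverse inequality $\leq$. My plan is to show that a maximizer of the left-hand side can be taken to be symmetric, by combining compactness of the unit sphere with a reduction to the symmetric-matrix case $d = 2$ and the first-order optimality conditions at a global optimum.

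First, by compactness of the product of unit spheres in $\mathbb{F}^n$, the supremum on the left is attained at some $(x_1^*, \ldots, x_d^*)$ with value $M = |\langle T, x_1^* \otimes \ldots \otimes x_d^*\rangle|$. In the complex case, after multiplying each $x_i^*$ by an appropriate unit phase, we may assume $\langle T, x_1^* \otimes \ldots \otimes x_d^*\rangle = M \geq 0$. Next, I would exploit the symmetric-matrix case: for any pair of slots $i \neq j$, fixing the other $d-2$ vectors at their optimal values produces a bilinear form
\begin{equation*}
B(a, b) := \langle T, x_1^* \otimes \ldots \otimes a \otimes \ldots \otimes b \otimes \ldots \otimes x_d^*\rangle,
\end{equation*}
with $a$ in slot $i$ and $b$ in slot $j$. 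The symmetry of $T$ yields $B(a,b) = B(b,a)$, so $B$ corresponds to a symmetric matrix, and the classical spectral theorem gives $\max_{\|a\|=\|b\|=1}|B(a,b)| = \max_{\|z\|=1}|B(z,z)|$. Hence there is a unit $z$ with $|B(z,z)| = M$, so we may equalize any chosen pair of slots without decreasing the objective.

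To close the loop I would combine these pairwise equalizations with the Lagrange multiplier condition at the optimum, which reads
\begin{equation*}
\langle T, x_1^* \otimes \ldots \otimes y \otimes \ldots \otimes x_d^*\rangle = M \langle y, x_i^*\rangle
\end{equation*}
for every $y \in \mathbb{F}^n$ and every slot $i$, the constant on the right being identified by taking $y = x_i^*$. By the symmetry of $T$, the left-hand side is invariant under permutation of the slots $1, \ldots, d$, and the resulting algebraic identities relate the different $x_i^*$ to one another. Together with the bilinear equalization procedure above, these identities should force the optimal vectors to be pairwise collinear and therefore, after phase adjustment, all equal to a common unit $x^*$, delivering $|\langle T, (x^*)^{\otimes d}\rangle| = M$.

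The main obstacle I anticipate is the last step: naive pairwise equalization can oscillate, since equalizing slots $(1,2)$ to a common $z$ may be disrupted when one subsequently equalizes slots $(2,3)$ to a different common $w$. Overcoming this requires either using the Lagrangian identities to constrain all slots simultaneously, or setting up a monotone discrepancy functional on the set of maximizers and arguing via compactness that it can be driven to zero. Passing cleanly from pairwise to global symmetry is where the technical heart of the argument sits.
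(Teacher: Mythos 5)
The paper does not actually prove this statement: it is quoted as a classical result of Banach with a citation, so there is no internal proof to compare against. Judged on its own terms, your proposal follows the standard route taken in the modern rediscoveries (reduce to the two-slot case via a symmetric bilinear form, then try to propagate), and the ingredients you assemble are individually sound: the easy inequality, existence of a maximizer by compactness, the phase normalization, and the two-slot equalization. One small caveat there: over $\mathbb{C}$ the form $B(a,b)$ is a complex symmetric (not Hermitian) bilinear form in the conjugated variables, so the relevant tool is the Takagi/Autonne factorization rather than the spectral theorem; the identity $\max_{\|a\|=\|b\|=1}\abs{B(a,b)}=\max_{\|z\|=1}\abs{B(z,z)}$ still holds, but it needs that statement, not the Hermitian one.

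The genuine gap is exactly the one you flag yourself: you never pass from pairwise equalization to global symmetry, and this is the entire content of Banach's theorem. Equalizing slots $(1,2)$ and then $(2,3)$ can undo the first step, and the Lagrange identities $\langle T, x_1^*\otimes\ldots\otimes y\otimes\ldots\otimes x_d^*\rangle = M\langle y, x_i^*\rangle$ by themselves do not force the $x_i^*$ to be collinear (they hold at any critical point, symmetric or not). A proof has to supply a termination mechanism. The standard one is: the set of global maximizers is compact, so choose a maximizer extremizing a discrepancy functional such as $\|\sum_i x_i^*\|$; show via the first-order conditions that if $x_i^*\neq x_j^*$ then the two-slot matrix $A$ satisfies $Ax_j^*=\mu x_i^*$ and $Ax_i^*=\mu x_j^*$, so $(x_i^*\pm x_j^*)$ are eigenvectors with eigenvalues $\pm\mu$ and replacing both slots by the normalization of $x_i^*+x_j^*$ (or of $x_i^*-x_j^*$ in the antipodal case) stays in the maximizer set while strictly increasing the functional --- a contradiction. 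Even this requires care with signs when $d$ is even and with phases when $\mathbb{F}=\mathbb{C}$. Without some such argument, your proposal is a plan with a correct skeleton but an unproven final step, so it does not yet constitute a proof.
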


The remainder of the article is devoted to exploring extensions of this result to $r \geq 1$, while imposing one of our four different notions of orthogonality. Somewhat surprisingly and in contrast to the matrix case $d=2$, none of the notions of orthogonality result in the existence of symmetric global maximizers, in general. An overview of the results is provided in Table~\ref{tab:summary}.

\begin{table}[tbhp]
{\footnotesize
\caption{Summary of results in Section~\ref{sec:mainresults}.}\label{tab:summary}
\begin{center}
\begin{tabular}{c|c}
\textbf{Orthogonality} & \textbf{Can optimal approximations always be chosen symmetric?} \\ \hline
$\mathcal{ON}_r$ & In general, no (Thm.~\ref{thm:no_ON}) \\ \hline
$\mathcal{SON}_r$ & In general, no (Thm.~\ref{thm:no_SON}, Thm.~\ref{thm:no_ON}) \\ \hline
$\mathcal{CON}_r$ & Yes for $n=2$ (Thm.~\ref{thm:mainn2}), no for $n > 2$ (Thm.~\ref{thm:main})\\ \hline
$\mathcal{PCON}_{r,P}$ & \makecell{In general, no (Thm.~\ref{thm:no_ON}). \\
Separate symmetry of dimensions in $P$ for $n=2$ (Thm.~\ref{thm:mainn2partial}) \\
Symmetry of tensor dimensions in $\{1, \ldots , d\} \smallsetminus P$ (Thm.~\ref{theorem:mainpartial})} \\
 \hline
\end{tabular}
\end{center}
}
\end{table}

The proofs of these statements are given in section~\ref{sec:main_proofs}. These will require a few results on the structure of symmetric tensors under orthogonality constraints, given in section \ref{sec:struct}, as well as a semidefinite programming formulation of the orthogonal approximation problem, provided in section~\ref{sec:SDP}. In addition, section~\ref{sec:examples} contains some further examples of how orthogonal approximations in the general tensor case differ from the matrix case, and section~\ref{sec:NP} concludes by showing that the approximation problem is in general NP-hard, for any $r\geq 1$.

\subsection{Symmetric tensors under orthogonality constraints}\label{sec:struct}
This section contains a number of structural results that are used to prove the main results in Table~\ref{tab:summary}. We would first like to point out the following distinction between symmetric tensors and symmetric decompositions of a tensor. For a tensor $Y \in \mathcal{A}_r$, one could ask for two seemingly different notions of symmetry: (i) for $Y$ to be symmetric with rank no more than $r$, or (ii) for the seemingly stronger condition that $Y$ has a symmetric decomposition of the form $Y = \sum_{k=1}^r \sigma_k v_k^{\otimes d}$. Without imposing any orthogonality conditions, the question of whether or not the sets in (i) and (ii) are equal is known in the literature as Comon's conjecture \cite{comon2008symmetric}, which has been proven in many special cases \cite{zhang2016comon,friedland2016remarks}, but is now known to not be true in general \cite{shitov2018counterexample}. For $Y \in \mathcal{CON}_r$, these two notions are however equivalent, because the terms in a rank decomposition of an orthogonally decomposable tensor can be uniquely computed by successively computing the optimal rank-one deflations \cite{zhang2001rank}, i.e., by recursively defining $Y_0 = 0$, $\sigma_i = \langle Y, Y_i \rangle$ and
\begin{equation}\label{eq:deflate}
Y_{i+1} := y_{i+1,1}\otimes \ldots \otimes y_{i+1, d} =  \argmax_{\|y_j\| \leq 1} \abs{ \langle Y - \sum_{k=1}^{i} \sigma_kY_k, y_1 \otimes \ldots \otimes y_d \rangle}.
\end{equation}
By Theorem~\ref{thm:banach} and the uniqueness of rank$-1$ approximations, when $Y$ is symmetric, each $Y_i$ is symmetric as well, i.e., $Y_i = v_i^{\otimes d}$. We now show that this statement also holds for symmetric tensors in $\mathcal{PCON}_{r,P}$, for any non-empty subset $P \subseteq \{1, \ldots , d\}$, i.e., when imposing partial orthogonality, the resulting analogue of Comon's conjecture is true. We will need the following result:

\begin{lemma}\label{lemma:symoperations}
For a symmetric tensor $T \in S^d(\mathbb{F}^n)$ and any vector $v \in \mathbb{F}^n$
\begin{enumerate}
\item $T\times_j v \in S^{d-1}(\mathbb{F}^n)$ is a symmetric tensor for any index $j$.
\item $T \times_j v = T \times_k v$ for any indices $1 \leq j,k \leq d$.
\end{enumerate}
\end{lemma}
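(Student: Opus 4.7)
The plan is to derive both parts directly from the definition of mode-contraction combined with full symmetry of $T$. I will treat $T\times_j v$ as an element of $\mathbb{F}^{n\times\ldots\times n}$ (with $d-1$ factors) via the reindexing
\begin{equation*}
(T\times_j v)(\ell_1,\ldots,\ell_{d-1}) \;=\; \sum_{m=1}^{n} T(\ell_1,\ldots,\ell_{j-1},m,\ell_j,\ldots,\ell_{d-1})\,v(m),
\end{equation*}
so that the free indices $\ell_1,\ldots,\ell_{d-1}$ correspond, in order, to the original positions $\{1,\ldots,d\}\setminus\{j\}$.

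For part (1), I would fix an arbitrary permutation $\varphi\in S^{d-1}$ of the free indices and lift it to a permutation $\tilde\varphi\in S^d$ that fixes position $j$ and acts as $\varphi$ on the remaining $d-1$ positions. Applying the symmetry of $T$ under $\tilde\varphi$ term by term inside the sum immediately yields $(T\times_j v)(\ell_{\varphi(1)},\ldots,\ell_{\varphi(d-1)}) = (T\times_j v)(\ell_1,\ldots,\ell_{d-1})$, so $T\times_j v\in S^{d-1}(\mathbb{F}^n)$.

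For part (2), by transitivity and the fact that any transposition is a product of adjacent transpositions, it suffices to establish $T\times_j v = T\times_{j+1} v$ for every $j$. Writing out both sides using the contraction formula above, and applying the transposition of positions $j$ and $j+1$ in $T$'s argument list (which leaves $T$ invariant by symmetry), each summand of the first expression matches the corresponding summand of the second.

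There is no real obstacle here: both statements are essentially bookkeeping consequences of the fact that full symmetry of $T$ means its entries depend only on the multiset of index values, so (a) contracting any one mode with $v$ leaves the entries depending symmetrically on the remaining multiset, and (b) which mode we choose to contract is immaterial. The only thing to be careful about is keeping the reindexing between $\{1,\ldots,d\}\setminus\{j\}$ and $\{1,\ldots,d-1\}$ consistent across the two statements.
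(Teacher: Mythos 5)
Your proposal is correct and follows essentially the same route as the paper: part (1) is proved by lifting the permutation of the free indices to a permutation of all $d$ positions fixing the contracted slot and invoking the symmetry of $T$ inside the sum, and part (2) is proved by applying the transposition of the two contracted positions (the paper does the general transposition of $j$ and $k$ directly rather than reducing to adjacent swaps, but this is immaterial). No gaps.
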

\begin{proof}
For the first statement, let $\varphi \in S^{d-1}$ be any permutation on $d-1$ elements. We have
\begin{equation}
\begin{split}
(T \times_j v)&(i_{\varphi(1)}, \ldots , i_{\varphi(j-1)}, i_{\varphi(j+1)}, \ldots, i_{\varphi(d)}) =\\
&= \sum_{i_j =1}^n T (i_{\varphi(1)},  \ldots, i_j, \ldots, i_{\varphi(d)}) v_{i_j} \\
&= \sum_{i_j =1}^n T (i_{1},  \ldots, i_{d}) v_{i_j} =  (T \times_j v)(i_1, \ldots , i_{j-1}, i_{j+1}, \ldots, i_d),
\end{split}
\end{equation}
where the second equality comes from $T$ being a symmetric tensor.

For the second statement, let $\varphi \in S^d$ be the permutation of $(1, \ldots , d)$ that swaps $j$ and $k$ and leaves the other indices unchanged. We can assume $j \leq k$ for notational purposes, since the complementary case follows by relabeling $k \leftrightarrow j$. The symmetry of $T$ implies that
\begin{equation}
\begin{split}
(T \times_j v)(i_1, \ldots , i_{j-1}, & i_{j+1}, \ldots, i_d) = \sum_{i_j =1}^n T (i_1,  \ldots, i_d) v_{i_j} \\
&= \sum_{i_j =1}^n T (i_{\varphi(1)},  \ldots, i_{\varphi(d)}) v_{i_j} \\
&=  (T \times_k v)(i_1, \ldots , i_{j-1}, i_k, i_{j+1}, \ldots, i_{k-1}, i_{k+1}, \ldots,  i_d) \\
&= (T \times_k v)(i_1, \ldots , i_{j-1}, i_{j+1}, \ldots,  i_d),
\end{split}
\end{equation}
where the second equality follows from the symmetry of $T$ and the last equality by the first statement of the Lemma.
\end{proof}

We next prove the first main result of this section, on the structure of symmetric and partially orthogonal tensors.
\begin{proposition}\label{prop:symdecomp}
Take $d\geq 3$ and $v_{kj}$ vectors with $\|v_{kj}\| = 1$. Let $T \in S^d(\mathbb{F}^n)$ be a symmetric tensor with decomposition $T = \sum_{k=1}^r \sigma_k \bigotimes_{j=1}^d v_{kj}$. Assume that there is an index $1 \leq j_* \leq d$ such that $v_{kj_*} \perp v_{k'j_*}$ for all $k \neq k'$. If $r$ is the minimal integer for which such a decomposition exists, then $v_{kj} = v_{k1}$ up to multiplication by a complex phase factor, for all $1 \leq j \leq d$ and $1 \leq k \leq r$.

Moreover, if there are two distinct indices $1 \leq j_*, j_{**} \leq d$ with $v_{kj_*} \perp v_{k'j_*}$ and also $v_{kj_{**}} \perp v_{k'j_{**}}$ for all $k \neq k'$, then any such decomposition has minimal $r$, which also equals the rank of $T$.
\end{proposition}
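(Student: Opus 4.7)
The plan is to prove the first statement by extracting the parallel structure through partial contractions, and then pinning down the remaining mode using symmetry combined with minimality. First, I would contract $T$ in mode $j_*$ against $\bar{v}_{k_0 j_*}$: by orthonormality of $\{v_{kj_*}\}_k$, only the $k=k_0$ term survives, giving $\sigma_{k_0}\bigotimes_{j\neq j_*} v_{k_0 j}$. By Lemma~\ref{lemma:symoperations}, this contraction is a symmetric tensor in $S^{d-1}(\mathbb{F}^n)$, and since $d-1\geq 2$, a symmetric rank-one tensor of order at least two forces all its factors to be parallel; as unit vectors, they therefore differ only by phases. Hence for each $k$ there is a unit vector $w_k$ with $v_{kj}$ a phase multiple of $w_k$ for every $j\neq j_*$. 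Absorbing phases into the scalars, I would rewrite the decomposition as
\[
T=\sum_{k=1}^r\tilde{\sigma}_k\, v_{kj_*}\otimes w_k^{\otimes(d-1)},
\]
with $v_{kj_*}$ placed at position $j_*$.

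Next, minimality of $r$ forces $\tilde{\sigma}_k\neq 0$ for all $k$ (otherwise a term can be dropped), and the $\{w_k\}$ to be pairwise non-parallel: if $w_i=\mu w_j$ for some $i\neq j$, then
\[
\tilde{\sigma}_i v_{ij_*}\otimes w_i^{\otimes(d-1)}+\tilde{\sigma}_j v_{jj_*}\otimes w_j^{\otimes(d-1)}=\bigl(\tilde{\sigma}_i\mu^{d-1}v_{ij_*}+\tilde{\sigma}_j v_{jj_*}\bigr)\otimes w_j^{\otimes(d-1)},
\]
whose mode-$j_*$ factor is still orthogonal to the remaining $v_{kj_*}$, producing a mode-$j_*$ orthogonal decomposition with $r-1$ terms and contradicting minimality. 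The key step is then to exploit the symmetry of $T$ between mode $j_*$ and some other mode, yielding $\sum_k \tilde{\sigma}_k(v_{kj_*}\otimes w_k-w_k\otimes v_{kj_*})\otimes w_k^{\otimes(d-2)}=0$. Contracting the last $d-2$ slots against an arbitrary $\bar y$, and then the second remaining slot against $\bar{v}_{k_0 j_*}$, produces
\[
\tilde{\sigma}_{k_0}\langle w_{k_0},y\rangle^{d-2}w_{k_0}=\sum_k\tilde{\sigma}_k\langle w_k,y\rangle^{d-2}\langle w_k,v_{k_0 j_*}\rangle v_{kj_*},
\]
so that $w_{k_0}\in\mathrm{span}\{v_{kj_*}\}_k$. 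Reading off the coefficient of $v_{kj_*}$ for $k\neq k_0$ gives the polynomial identity $\tilde{\sigma}_{k_0}\langle w_{k_0},v_{kj_*}\rangle\langle w_{k_0},y\rangle^{d-2}=\tilde{\sigma}_k\langle w_k,v_{k_0 j_*}\rangle\langle w_k,y\rangle^{d-2}$ in $\mathbb{F}[y_1,\ldots,y_n]$. Since $w_{k_0}$ and $w_k$ are non-parallel and $d-2\geq 1$, unique factorization makes $\langle w_{k_0},y\rangle^{d-2}$ and $\langle w_k,y\rangle^{d-2}$ linearly independent as polynomials, so both scalar coefficients must vanish. In particular $\langle w_{k_0},v_{kj_*}\rangle=0$ for all $k\neq k_0$, and combined with $w_{k_0}\in\mathrm{span}\{v_{kj_*}\}$ and $\|w_{k_0}\|=1$, this yields $w_{k_0}=\alpha\, v_{k_0 j_*}$ for some phase $\alpha$. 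Hence every $v_{kj}$ is a phase multiple of $v_{k1}$.

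For the moreover statement, I would apply the contraction step to both mode $j_*$ and mode $j_{**}$ independently, obtaining for each $k$ a unit vector $w_k$ with $v_{kj}$ parallel to $w_k$ for $j\neq j_*$, and a unit vector $w_k'$ with $v_{kj}$ parallel to $w_k'$ for $j\neq j_{**}$. Since $d\geq 3$ and $j_*\neq j_{**}$, some $j$ lies outside both, forcing $w_k$ and $w_k'$ to be parallel, and therefore all $v_{kj}$ to be parallel to a single unit vector $u_k$. Thus $T=\sum_k\tilde{\sigma}_k u_k^{\otimes d}$ with $\{u_k\}$ orthonormal, i.e., a symmetric completely orthogonal decomposition. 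Such decompositions have rank exactly $r$, because the tensors $u_k^{\otimes d}$ are linearly independent, so $r=\mathrm{rank}(T)$, which is a lower bound on the number of terms in any decomposition of $T$; in particular $r$ is minimal. The main obstacle is the polynomial independence argument, which crucially depends on both $d\geq 3$ (so that $\langle\cdot,y\rangle^{d-2}$ is a positive-degree homogeneous polynomial) and the pairwise non-parallelism of $\{w_k\}$ extracted from minimality.
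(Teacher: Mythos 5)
Your proof of the first statement is correct and follows essentially the same route as the paper's: contract mode $j_*$ against each $v_{k_0j_*}$ and invoke Lemma~\ref{lemma:symoperations} to force the remaining modes of each term to be parallel, use minimality to make the $w_k$ pairwise non-parallel, then exploit the symmetry between mode $j_*$ and a second mode. The only real difference is the execution of the last step: the paper contracts $T\times_{j_*}v_{ij_*}=T\times_{2}v_{ij_*}$ against a single $v_{jj_*}$ and runs a case dichotomy (either $w_i\parallel w_j$, killed by minimality, or $v_{jj_*}\perp w_i$ and $v_{ij_*}\perp w_j$), whereas you contract against a generic $y$ and compare coefficients; both hinge on the same fact, namely that proportionality of $\langle w_i,\cdot\rangle^{d-2}$ and $\langle w_j,\cdot\rangle^{d-2}$ forces $w_i\parallel w_j$ when $d\geq 3$. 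This is a cosmetic variation, not a different method, and your version checks out (including the observation that $w_{k_0}\in\mathrm{span}\{v_{kj_*}\}_k$, which substitutes for the paper's direct argument that $v_{ij_*}\cdot w_i\neq 0$).

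The \emph{moreover} part has a gap. Your reduction to $T=\sum_k\tilde{\sigma}_k u_k^{\otimes d}$ with $\{u_k\}$ orthonormal is fine (and correctly needs no minimality, only nonvanishing of the coefficients), but the assertion that such a decomposition has rank exactly $r$ ``because the tensors $u_k^{\otimes d}$ are linearly independent'' does not follow: linear independence of the summands only shows that no term of \emph{this particular} decomposition can be dropped, and gives no lower bound on $\mathrm{rank}(T)$, since a shorter decomposition could use entirely different rank-one terms (compare $e_1\otimes e_1+e_1\otimes e_2$, a sum of two linearly independent rank-one matrices of rank one). The missing ingredient is the paper's flattening argument: the first unfolding $M$ of $\sum_k\tilde{\sigma}_ku_k^{\otimes d}$ is a sum of $r$ rank-one matrices whose left factors $u_k$ and right factors (the vectorizations of $u_k^{\otimes(d-1)}$) are each mutually orthogonal, so $\mathrm{rank}(M)=r$, and $r\geq\mathrm{rank}(T)\geq\mathrm{rank}(M)=r$. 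With that substitution your argument closes; note that the paper obtains the conclusion directly from the unfolding of the original decomposition, using the two orthogonal modes $j_*$ and $j_{**}$, without first symmetrizing the terms.
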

\begin{proof}
By permuting the tensor dimensions if necessary, we can without loss of generality assume that $j_* = 1$. For any $i$, $T\times_1 v_{i1} = \sigma_i \otimes_{j=2}^d v_{ij}$ is symmetric by Lemma~\ref{lemma:symoperations}, so $v_{ij} = v_{i2}$ up to multiplication by a complex phase factor, for each $j \geq 2$, i.e., $T =  \sum_{k=1}^r \sigma_k v_{k1} \otimes v_{k2}^{\otimes d-1}$ after absorbing the complex phase factors into $\sigma_k$.

We next prove that $v_{i1} = v_{i2}$ up to a complex phase factor. By Lemma~\ref{lemma:symoperations}, we have
\begin{equation}
T \times_1 v_{i1} = \sigma_i v_{i2}^{\otimes d-1} = T \times_2 v_{i1} =\sum_{k=1}^r \sigma_k (v_{i1}\cdot v_{k2})v_{k1} \otimes v_{k2}^{\otimes d-2}.
\end{equation}
For any $j$, acting with $v_{j1}$ on the first tensor dimension on both sides of this equation, we obtain $ \sigma_i (v_{j1}\cdot v_{i2}) v_{i2}^{\otimes d-2} = \sigma_j (v_{i1}\cdot v_{j2}) v_{j2}^{\otimes d-2}$. This implies that, for each $j$, either (i) $v_{i2} = v_{j2}$ up to a complex phase factor, or (ii) $v_{j1} \perp v_{i2}$ and $v_{i1} \perp v_{j2}$.

If the first case holds, i.e., if for some $j\neq i$ and complex $\lambda$, $v_{i2} = \lambda v_{j2}$, then
\begin{equation}
\sigma_i v_{i1}\otimes v_{i2}^{d-1} + \sigma_j v_{j1}\otimes v_{j2}^{d-1} = (\sigma_i\lambda^{d-1}v_{i1} + \sigma_jv_{j1})\otimes v_{j2}^{\otimes d-1},
\end{equation}
so $T =  (\sigma_i\lambda^{d-1}v_{i1} + \sigma_jv_{j1})\otimes v_{j2}^{\otimes d-1} + \sum_{k\neq i,j}^r \sigma_k v_{k1} \otimes v_{k2}^{\otimes d-1}$ is a strictly shorter decomposition of $T$ with orthogonality in the first tensor dimension, which contradicts the minimality of $r$.

We have therefore shown that $v_{i1} \perp v_{j2}$ and $v_{j1} \perp v_{i2}$ for all $j \neq i$. This implies that $T\times_1 v_{i1} = \sigma_i v_{i2}^{\otimes d-1} = T\times_2 v_{i1} = \sigma_i (v_{i1}\cdot v_{i2}) v_{i1}\otimes v_{i2}^{\otimes d-2}$. The left hand side shows that $v_{i1} \cdot v_{i2} \neq 0$, so $v_{i1} = v_{i2}$ up to a complex phase factor. Since $i$ was arbitrary, this concludes the proof of the first statement, after absorbing the complex phase factor into $\sigma_i$.

For the second statement, we can assume $j_* = 1, j_{**} = 2$ by permuting the tensor dimensions, if necessary. Let $M$ be the first unfolding matrix of $T$ defined by $M\bigl(i_1, (i_2\ldots i_d)\bigr) = T(i_1 \ldots i_d)$ with $(i_2\ldots i_d)$ written as one long index. $M$ then has the decomposition $M = \sum_{k=1}^r v_{k1}\otimes u_k$ with $u_k(i_2\ldots i_d) = \prod_{j=2}^d v_{kj}(i_j)$, so $\text{rank}(M) \leq r$. Now, $v_{k1} \perp v_{k'1}$ for $k \neq k'$ and $u_k \perp u_{k'}$, since $v_{k2} \perp v_{k'2}$. It follows that $ r = \text{rank}(M)$. Since $r \geq \text{rank}(T) \geq \text{rank}(M)  = r$, we conclude that $r = \text{rank}(T)$ and we showed the second statement.
\end{proof}

We can extend the previous result also to the following setting.
\begin{corollary}\label{cor:symdecomp}
Take $d\geq 3$ and $v_{kj}$ vectors with $\|v_{kj}\| = 1$. Let $T \in S^d(\mathbb{F}^n)$ be a symmetric tensor with decomposition $T = \sum_{k=1}^r \sigma_k \bigotimes_{j=1}^d v_{kj}$ where there is an index $1 \leq j_* \leq d$ so that the vectors $\{v_{kj_*}\}_{k=1}^r$ are linearly independent. If $r$ is the minimal integer for which such a decomposition exists, then $v_{kj} = v_{k1}$ up to multiplication by a complex phase factor, for all $1 \leq j \leq d$ and $1 \leq k \leq r$.
\end{corollary}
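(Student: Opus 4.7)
The plan is to reduce to the orthogonal case of Proposition~\ref{prop:symdecomp} by using a dual-basis trick in place of orthogonality. After permuting tensor dimensions we may assume $j_* = 1$. Since $\{v_{k1}\}_{k=1}^r$ are linearly independent, we can choose a dual family $w_1, \ldots , w_r \in \mathrm{span}\{v_{k1}\}_{k=1}^r$ with $\langle v_{k1}, w_i\rangle = \delta_{ki}$. Contracting in the first tensor dimension with $w_i$ gives $T\times_1 w_i = \sigma_i \bigotimes_{j=2}^d v_{ij}$, which by Lemma~\ref{lemma:symoperations}(1) is a symmetric tensor of order $d-1 \geq 2$ and rank at most one; hence its factors are all proportional, so we may absorb phase factors into $\sigma_k$ and write $T = \sum_{k=1}^r \sigma_k v_{k1} \otimes v_{k2}^{\otimes(d-1)}$.

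Next I would invoke Lemma~\ref{lemma:symoperations}(2) to equate $T\times_1 w_i$ with $T\times_2 w_i$, yielding the identity
\begin{equation*}
\sigma_i\, v_{i2}^{\otimes (d-1)} = \sum_{k=1}^r \sigma_k (v_{k2}\cdot w_i)\, v_{k1} \otimes v_{k2}^{\otimes(d-2)}.
\end{equation*}
Applying $w_j$ on the first tensor slot for $j\neq i$ collapses the right-hand side and produces $\sigma_i (v_{i2}\cdot w_j) v_{i2}^{\otimes(d-2)} = \sigma_j (v_{j2}\cdot w_i) v_{j2}^{\otimes(d-2)}$. This forces a case split: either $v_{i2}$ and $v_{j2}$ are proportional, or both inner products vanish. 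In the proportional case, the two terms indexed by $i$ and $j$ in the decomposition can be combined into a single rank-one term whose first factor is a nonzero linear combination of $v_{i1}$ and $v_{j1}$; the remaining first factors, together with this new one, are still linearly independent (replacing one vector in an independent set by a nonzero combination in the span of two of its members preserves independence), contradicting minimality of $r$.

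Thus $(v_{k2}\cdot w_i) = 0$ for all $k\neq i$, so the displayed identity collapses to $\sigma_i v_{i2}^{\otimes(d-1)} = \sigma_i (v_{i2}\cdot w_i) v_{i1}\otimes v_{i2}^{\otimes(d-2)}$. Since $\|v_{i2}\|=1$ and $\sigma_i\neq 0$ (else the $i$-th term could be dropped, again contradicting minimality), both sides are nonzero rank-one tensors, and the uniqueness of rank-one factorizations gives $v_{i1} = \lambda_i v_{i2}$ for some phase factor, which together with Step~2 proves the claim. The main obstacle is the case analysis in the intermediate step: the combining argument used to rule out $v_{i2} \propto v_{j2}$ must preserve the linear independence hypothesis on the first factors rather than just orthogonality (as in the proof of Proposition~\ref{prop:symdecomp}), and it is precisely the linear-algebra observation about independent sets that allows the original strategy to go through in this more general setting.
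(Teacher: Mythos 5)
Your proof is correct, but it takes a genuinely different route from the paper. The paper proves this corollary as a two-line reduction: it picks an invertible matrix $A$ sending each $v_{kj_*}$ to $e_k$, applies $A$ to every mode so that the transformed tensor $S = T\times_1 A \times_2 \ldots \times_d A$ is still symmetric but now lies in $\mathcal{PCON}_{r,\{j_*\}}$, checks that minimality of $r$ transfers (a shorter orthogonal decomposition of $S$ would pull back under $A^{-1}$ to a shorter linearly independent decomposition of $T$), invokes Proposition~\ref{prop:symdecomp}, and pulls the conclusion back. You instead re-run the proof of Proposition~\ref{prop:symdecomp} from scratch, replacing the orthogonality of the $\{v_{k1}\}$ by the existence of a dual family $\{w_i\}$ with $v_{k1}\cdot w_i = \delta_{ki}$; every step of that proposition's argument (isolating the $i$-th term by contraction, the symmetry of $T\times_1 w_i$, the case split between proportional second factors and vanishing pairings, and the term-combining contradiction of minimality) goes through verbatim, with the one new ingredient being your observation that replacing $v_{i1}, v_{j1}$ by a nonzero combination of the two preserves linear independence of the first factors. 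Your version is longer but has the virtue of making explicit that Proposition~\ref{prop:symdecomp} only uses orthogonality through the existence of a dual family, and it avoids having to verify that minimality and normalization are preserved under the change of basis; the paper's version buys brevity by reusing the proposition as a black box. One small point of care in your write-up: the dual family should be taken with respect to the bilinear pairing used in the mode contraction (so that $T\times_1 w_i$ really isolates the $i$-th term), not the sesquilinear inner product over $\mathbb{C}$ --- this is the same looseness the paper itself indulges in and does not affect correctness.
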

\begin{proof}
Since the vectors $\{v_{kj_*}\}_{k=1}^r$ are linearly independent, there is an invertible matrix $A$ mapping each $v_{kj_*}$ to $e_k$. The tensor $S = T\times_1 A \times_2 \ldots \times_d A$ is symmetric and can be written as $S = \sum_{k=1}^r \sigma_k \bigotimes_{j=1}^d Av_{kj}$. $S$ is therefore in $\mathcal{PCON}_{r, \{j_*\}}$. Moreover, if there is an $s < r$ such that $S = \sum_{k=1}^s \lambda_k \bigotimes_{j=1}^d w_{kj}$ with the vectors $\{w_{kj_*}\}_{k=1}^s$ mutually orthogonal, then the vectors $\{A^{-1}w_{kj_*}\}_{k=1}^s$ are linearly independent, and $T = S\times_1 A^{-1} \times_2 \ldots \times_d A^{-1} = \sum_{k=1}^s \lambda_k \bigotimes_{j=1}^d A^{-1}w_{kj}$, which contradicts the minimality of $r$. Applying Theorem~\ref{prop:symdecomp} now shows that $Av_{kj} = Av_{k1}$, for all $k$ and $j$, so also $v_{kj} = v_{k1}$.
\end{proof}

In the case $d = 2$, the first statement in Proposition~\ref{prop:symdecomp} is no longer true, as shown by the decomposition $\bigl[ \begin{smallmatrix} 1 & 1 \\ 1 & 0 \end{smallmatrix} \bigr] = \bigl[ \begin{smallmatrix} 1\\ 1 \end{smallmatrix} \bigr] \otimes \bigl[ \begin{smallmatrix} 1\\ 0 \end{smallmatrix} \bigr]  + \bigl[ \begin{smallmatrix} 1\\ 0 \end{smallmatrix} \bigr] \otimes \bigl[ \begin{smallmatrix} 0\\ 1 \end{smallmatrix} \bigr]$, which has minimal length since the matrix has rank $2$ and has orthogonality in the second tensor dimension.

The corresponding statement of Proposition~\ref{prop:symdecomp} for tensors $T \in \mathcal{SON}_r$ or $ \mathcal{ON}_r$ is however not true. We must therefore in general distinguish between symmetric approximations and approximations with symmetric decompositions. In fact, we will use the following two characterizations of symmetric tensors in $\mathcal{ON}_r$ and $\mathcal{SON}_r$ for $r =2$ and $r=3$, respectively.

\begin{theorem}\label{thm:symrank2}
For any $n$ and $d$, we have
\begin{equation}
\begin{split}
\mathcal{ON}_2\cap S^d(\mathbb{F}^n) &= \mathcal{SON}_2\cap S^d(\mathbb{F}^n) = \mathcal{CON}_2\cap S^d(\mathbb{F}^n) \\
&= \{\sigma_1 v_1^{\otimes d} + \sigma_2 v_2^{\otimes d} : \sigma_1, \sigma_2 \in \mathbb{F}, v_1, v_2 \in \mathbb{F}^n, v_1\perp v_2\}.
\end{split}
\end{equation}
\end{theorem}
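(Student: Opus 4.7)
I would begin by isolating the only non-trivial direction. The containment
\begin{equation*}
\{\sigma_1 v_1^{\otimes d} + \sigma_2 v_2^{\otimes d} : \sigma_i \in \mathbb{F},\ v_i \in \mathbb{F}^n,\ v_1 \perp v_2\} \subseteq \mathcal{CON}_2 \cap S^d(\mathbb{F}^n)
\end{equation*}
is immediate, and $\mathcal{CON}_2 \subseteq \mathcal{SON}_2 \subseteq \mathcal{ON}_2$ holds by definition, so all three intersections coincide with $\mathcal{ON}_2 \cap S^d(\mathbb{F}^n)$ once I prove the reverse inclusion $\mathcal{ON}_2 \cap S^d(\mathbb{F}^n) \subseteq \{\sigma_1 v_1^{\otimes d} + \sigma_2 v_2^{\otimes d} : v_1 \perp v_2\}$. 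Fixing such a $T$ and an orthogonal decomposition $T = \sigma_1 \bigotimes_j v_{1j} + \sigma_2 \bigotimes_j v_{2j}$ with $\|v_{kj}\| = 1$, I would pick an index $j_*$ that witnesses orthogonality, i.e., for which $\langle v_{1j_*}, v_{2j_*}\rangle = 0$.

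The next step is to split on the tensor rank of $T$. If $\mathrm{rank}(T) \leq 1$, which also covers the degenerate situations in which one $\sigma_k$ vanishes or the two rank-one summands are collinear, then symmetry of $T$ forces the factors of any rank-one decomposition to be pairwise proportional, so $T = \sigma v^{\otimes d}$ for some scalar $\sigma$ and unit $v$, and I would simply enlarge trivially via $T = \sigma v^{\otimes d} + 0 \cdot w^{\otimes d}$ for any unit $w \perp v$. Otherwise $\mathrm{rank}(T) = 2$ as a tensor, so no length-one decomposition of $T$ exists at all, and therefore the given length-two decomposition is automatically the minimal one with orthogonality at $j_*$, placing me squarely in the hypothesis of Proposition~\ref{prop:symdecomp}.

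For $d \geq 3$, Proposition~\ref{prop:symdecomp} then yields $v_{kj} = \omega_{kj} v_{k1}$ for some unimodular scalars $\omega_{kj}$ and all $k,j$. Absorbing $\prod_j \omega_{kj}$ into each $\sigma_k$ and writing $v_k := v_{k1}$ collapses the decomposition to $T = \tilde\sigma_1 v_1^{\otimes d} + \tilde\sigma_2 v_2^{\otimes d}$, and the identity $\langle v_{1j_*}, v_{2j_*}\rangle = \omega_{1j_*}\overline{\omega_{2j_*}}\langle v_1, v_2\rangle = 0$ transports orthogonality at the single index $j_*$ down to $v_1 \perp v_2$, as required. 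The boundary case $d = 2$ is not covered by Proposition~\ref{prop:symdecomp}, but there $T$ is a symmetric matrix in $S^2(\mathbb{F}^n)$ of rank at most $2$, and I would directly invoke the spectral theorem (when $\mathbb{F} = \mathbb{R}$) or the Autonne--Takagi factorization (when $\mathbb{F} = \mathbb{C}$) to obtain the representation $\sigma_1 v_1 v_1^T + \sigma_2 v_2 v_2^T$ with $v_1 \perp v_2$.

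I do not expect a deep obstacle. The only point requiring genuine care is the minimality hypothesis of Proposition~\ref{prop:symdecomp}: one must rule out the possibility that $T$ secretly admits a shorter decomposition with orthogonality at $j_*$, and this is exactly what the split into $\mathrm{rank}(T)\leq 1$ and $\mathrm{rank}(T) = 2$ accomplishes. Beyond that, tracking the unimodular phases $\omega_{kj}$ through the absorption step and verifying that orthogonality at the single index $j_*$ really survives are the only pieces of bookkeeping, and the $d = 2$ case is classical.
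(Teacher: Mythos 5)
Your proposal is correct and follows essentially the same route as the paper: reduce to Proposition~\ref{prop:symdecomp} after using orthogonality of the two terms to find an index $j_*$ with $v_{1j_*}\perp v_{2j_*}$. Your additional bookkeeping is in fact slightly more careful than the paper's own two-line proof, which silently assumes the minimality hypothesis of Proposition~\ref{prop:symdecomp} (your rank split handles the degenerate rank-$\leq 1$ case) and does not separately treat $d=2$, where that proposition is inapplicable and the spectral/Takagi argument you give is the right substitute.
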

\begin{proof}
Take first any $T \in \mathcal{ON}_2\cap S^d(\mathbb{F}^n)$ with orthogonal decomposition $T = \sigma_1 \bigotimes_{j=1}^d v_{1j} + \sigma_2 \bigotimes_{j=1}^d v_{2j} $. By possibly permuting the tensor dimensions, we can without loss of generality assume that $v_{11} \perp v_{21}$, so the conclusion follows from Proposition~\ref{prop:symdecomp}. The same argument applies to $\mathcal{SON}_2$ and $\mathcal{CON}_2$.
\end{proof}

\begin{theorem}\label{thm:symrank3}
For any $n$ and any $d>3$, we have
\begin{equation}
\begin{split}
\mathcal{SON}_3 & \cap S^d(\mathbb{F}^n) = \mathcal{CON}_3\cap S^d(\mathbb{F}^n)  \\
&= \Bigl\{\sigma_1 v_1^{\otimes d} + \sigma_2 v_2^{\otimes d} + \sigma_3 v_3^{\otimes d} : \sigma_k \in \mathbb{F}, v_k \in \mathbb{F}^n, 
\langle v_k, v_{k'}\rangle = \delta_{k,k'}\Bigr\},
\end{split}
\end{equation}
and for $d=3$
\begin{equation}
\begin{split}
\mathcal{SON}_3\cap S^3(\mathbb{F}^n) =\Bigl\{\sigma_1 v_1^{\otimes d} + \sigma_2 v_2^{\otimes d} + \sigma_3 v_3^{\otimes d} : \sigma_k \in \mathbb{F}, v_k \in \mathbb{F}^n, 
\langle v_k, v_{k'} \rangle = \delta_{k,k'} \Bigr\}  \\
\cup \Bigl\{\sigma \left( v\otimes w \otimes w + w\otimes v \otimes w + w\otimes w \otimes v \right) :\label{eq:d3son} \sigma \in \mathbb{F}, v, w \in \mathbb{F}^n, v \perp w, \|v\| = \|w\| = 1\Bigr\}.
\end{split}
\end{equation}
\end{theorem}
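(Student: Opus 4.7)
The plan is to prove both inclusions. The $\supseteq$ direction follows by direct verification: each listed tensor is symmetric by inspection, and the pairwise inner products of its rank-one terms vanish while the factorwise orthogonality/parallelism patterns match the relevant notion. For the orthonormal form $\sum_k \sigma_k v_k^{\otimes d}$ the terms are completely orthogonal. For the slice form $\sigma(v\otimes w \otimes w + w\otimes v\otimes w + w\otimes w \otimes v)$ in $d = 3$, each pair of summands shares one factor $w$ (parallel) and has two factors of the pattern $(v,w)$ with $v \perp w$, giving strong but not complete orthogonality.

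For the $\subseteq$ direction, take $T \in \mathcal{SON}_3 \cap S^d(\mathbb{F}^n)$ with strongly orthogonal decomposition $T = \sum_{k=1}^3 \sigma_k \bigotimes_{j=1}^d v_{kj}$. If some $\sigma_k$ vanishes, invoke Theorem \ref{thm:symrank2} on the surviving terms and adjoin a zero term along a direction orthogonal to the others. Otherwise, for each pair $(k,k')$ and each dimension $j$, strong orthogonality forces $v_{kj} \perp v_{k'j}$ or $v_{kj}$ parallel to $v_{k'j}$; let $P_{k,k'} \subseteq \{1,\ldots,d\}$ denote the parallel set. The proof then splits on whether $\bigcup_{k<k'} P_{k,k'}$ is all of $\{1,\ldots,d\}$ or not.

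Case A (some $j_* \notin P_{1,2} \cup P_{1,3} \cup P_{2,3}$): the vectors $v_{1j_*}, v_{2j_*}, v_{3j_*}$ are pairwise orthogonal, so after replacing the decomposition by one of minimal length still featuring orthogonality at $j_*$, Proposition \ref{prop:symdecomp} gives $v_{kj} = v_{k1}$ up to phase for every $k,j$, yielding the orthonormal form. Case B ($P_{1,2} \cup P_{1,3} \cup P_{2,3} = \{1,\ldots,d\}$): for $d > 3$, pigeonhole produces a pair, say $(1,2)$, parallel at two distinct dimensions $j_1 \ne j_2$. I would then exploit the identity $T\times_{j_1} u = T\times_{j_2} u$ from Lemma \ref{lemma:symoperations} for well-chosen $u$ (e.g.\ $v_{3j_1}$ and $v_{3j_2}$) to derive enough rigidity that either the first two rank-one terms combine into a single rank-one term (collapsing to Theorem \ref{thm:symrank2}) or a common orthogonal dimension emerges (reducing to Case A). For $d = 3$, enumerate the small number of configurations compatible with $P_{1,2}\cup P_{1,3} \cup P_{2,3} = \{1,2,3\}$; in each, the three transposition-symmetry identities $T^{(ij)} = T^{(ik)} = T^{(jk)} = T$ become, after writing $T$ in coordinates, linear relations of the shape $(\alpha a + \beta c)w^T = w(\alpha a + \beta c)^T$ with $a, c \perp w$, which force $\alpha a + \beta c = 0$ and propagate to show that the three ``non-$w$'' directions in the decomposition coincide up to phase and that the three coefficients match in modulus, producing the slice form after rescaling.

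The main obstacle will be Case B for $d > 3$: pigeonhole only gives one pair parallel at two dimensions, and the argument must track how the symmetry constraints propagate to the third term's vector to either force a reduction of rank or create a common orthogonal dimension. The $d = 3$ subcase requires careful but finite enumeration of parallel-pattern configurations, plus attention to complex phase factors in the rescaling step that produces the single scalar $\sigma$ appearing in the slice form.
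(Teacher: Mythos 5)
Your $\supseteq$ direction is routine and correct, and your Case A is essentially the paper's opening move: if some dimension $j_*$ has all three factors mutually orthogonal, Proposition~\ref{prop:symdecomp} forces the orthonormal form (your explicit passage to a minimal-length decomposition before invoking it is a point the paper glosses over, and is a genuine improvement in care). The gap is Case B, which is where the entire substance of the theorem lives, and there you have written a plan rather than a proof --- as you acknowledge yourself when you call it ``the main obstacle.'' For $d>3$ the correct endpoint of Case B is not your dichotomy (``terms combine or a common orthogonal dimension emerges''): what the analysis must show is that the configuration with three nonzero terms and no fully orthogonal dimension is \emph{impossible} --- every sub-case forces a coefficient to vanish or $T=0$, whence $T\in\mathcal{SON}_2$ and Theorem~\ref{thm:symrank2} applies. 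Nothing in your outline derives this, and it is not clear that contracting at the two dimensions where one pair is parallel suffices, since the constraint you have not yet exploited is the interaction with the \emph{third} term. For $d=3$ your ``small enumeration'' is also not established and not small: each of the three parallel-sets $P_{k,k'}$ is one of seven proper subsets of $\{1,2,3\}$, and the matrix identity $(\alpha a+\beta c)w^T=w(\alpha a+\beta c)^T$ you invoke is asserted, not derived from a specific configuration.

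The paper avoids both the pigeonhole and the enumeration by a sequential normalization that treats $d=3$ and $d>3$ uniformly. Orthogonality of terms $1$ and $3$ gives a dimension (say $1$) with $w_{11}\perp w_{31}$; since no dimension is fully orthogonal, strong orthogonality forces $w_{21}=w_{11}$ up to phase (after relabeling terms). Repeating with terms $1$ and $2$ gives a second dimension with $w_{12}\perp w_{22}$ and $w_{32}=w_{22}$. The contractions $T\times_1 w_{31}$ and $T\times_2 w_{12}$ then each annihilate two of the three terms, and their symmetry (Lemma~\ref{lemma:symoperations}) pins down every factor of terms $1$ and $3$, leaving only the dimension-$3$ factor of term $2$ undetermined. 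A four-way case split on the relative position of $u$, $w_{23}$, $a$ in that dimension ($u=w_{23}\perp a$; $u\perp w_{23}=a$; $u=a\perp w_{23}$; $u=a=w_{23}$) is then dispatched by one or two further contraction identities per case; the slice form survives only in the third case and only when $d=3$. If you want to complete your outline, the concrete statement to prove is exactly this normalization plus the four-case analysis --- that is the missing content, and it is the bulk of the proof.
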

\begin{proof}
The proof exhaustively considers the possible cases, and we treat the cases $d=3$ and $d > 3$ simultaneously. For any $d \geq 3$, write $T \in \mathcal{SON}_3\cap S^d(\mathbb{F}^n) $ as
\begin{equation}\label{eq:defT}
T = \sigma_1 \bigotimes_{j=1}^d w_{1j} + \sigma_2 \bigotimes_{j=1}^d w_{2j} + \sigma_3 \bigotimes_{j=1}^d w_{3j}.
\end{equation}
If there is some index $j_*$ such that the three vectors $w_{1j_*}, w_{2j_*}, w_{3j_*}$ are mutually orthogonal, then $T\in \mathcal{PCON}_3\cap S^d(\mathbb{F}^n)$, so $ T = \sigma_1 w_{1j_*}^{\otimes d} +\sigma_1 w_{3j_*}^{\otimes d} +\sigma_3 w_{3j_*}^{\otimes d} $ with $\langle w_{kj_*}, w_{k'j_*} \rangle = \delta_{k,k'}$, by Theorem~\ref{prop:symdecomp}.

We proceed by considering the case when there is no such index $j_*$. In the following, we will make repeated use of the fact that strong orthogonality then implies that, for any fixed index $j$, there will be two distinct indices $k$ and $k'$ such that $w_{kj} = w_{k'j}$. Since the term $\bigotimes_{j=1}^d w_{1j} $ is orthogonal to the term $\bigotimes_{j=1}^d w_{3j}$, there will be some index $j$ such that $w_{1j} \perp w_{3j}$. By possibly permuting the dimensions, which does not affect the symmetry of $T$, we can assume that $w_{11} \perp w_{31}$. Because of strong orthogonality, it must be the case that either $w_{21} = w_{11}$ or $w_{21} = w_{31}$ up to complex phase factors. By possibly reordering the first and third terms in the definition of $T$, we can assume that $w_{21} = w_{11}$, after absorbing a phase factor into $\sigma_2$. 

Again, since the term $\bigotimes_{j=1}^d w_{1j} $ is orthogonal to the term $\bigotimes_{j=1}^d w_{2j}$, there will be some index $j$ such that $w_{1j} \perp w_{2j}$. This cannot happen for the first tensor dimension, since $w_{21} = w_{11}$.  By possibly permuting the tensor dimensions, we can assume that this occurs in the second tensor dimension, i.e., $w_{22} \perp w_{12}$. By strong orthogonality, we then have either $w_{32} = w_{12}$ or $w_{32} = w_{22}$ up to complex phase factors. After potentially reordering the first two terms in Equation~\eqref{eq:defT} and absorbing a complex phase factor, we have $w_{32} = w_{22}$. Note that this reordering does not change the assumptions in the first tensor dimension, i.e., that $w_{21} = w_{11} \perp w_{31}$. Summarizing the steps so far, this means that we can write
\begin{equation}
T = \sigma_1 a\otimes \bigotimes_{j=2}^d w_{1j} + \sigma_2 a\otimes u\otimes \bigotimes_{j=3}^d w_{2j} + \sigma_3 w_{31}\otimes u \otimes \bigotimes_{j=3}^d w_{3j},
\end{equation}
for some $a,u$, where $a \perp w_{31}$ and $u\perp w_{12}$. By Lemma~\ref{lemma:symoperations}, it follows that $T\times_1 w_{31} = \sigma_3 u\otimes \bigotimes_{j=3}^d w_{3j}$ is symmetric so $w_{3j} = u$ for $j \geq 2$ after absorbing a phase factor into $\sigma_3$. Likewise, $T\times_2 w_{12} = \sigma_1 a \otimes  \bigotimes_{j=3}^d w_{1j}$ is symmetric so $w_{11} = w_{1j} = a$ for all $j \geq 3$. $T$ is therefore of the form
\begin{equation}\label{eq:defT2}
T = \sigma_1 a \otimes w_{12} \otimes a^{d-2} + \sigma_2 a \otimes u \otimes \bigotimes_{j=3}^d w_{2j} + \sigma_3 w_{31} \otimes u^{\otimes d-1},
\end{equation}
where $a \perp w_{31}$ and $u\perp w_{12}$. In the third tensor dimension, it is by assumption not the case that $u, w_{23}$, and $a$ are all mutually orthogonal. By strong orthogonality, it must then be the case that either $u = w_{23} \perp a$, $u \perp w_{23} = a$, $u = a \perp w_{23}$ or $ u = a = w_{23}$. We study these four cases in turn.

\emph{Case $1$: $u = w_{23} \perp a$}. We have $T\times_3 a = \sigma_1 a\otimes w_{12} \otimes a^{\otimes d-3}$ symmetric, so $w_{12} = a$ after absorbing complex phase factors. This means that $T\times_3 a = \sigma_1 a^{\otimes d-1} = T\times_1 a =  \sigma_1 a^{\otimes d-1} +  \sigma_2 u\otimes u \otimes \bigotimes_{j=4}^d w_{2j}$. The second term in Equation~\eqref{eq:defT2} is then zero, so $T\in \mathcal{SON}_2\cap S^d(\mathbb{F}^n) $ and $T = \sigma_1 v_1^{\otimes d} + \sigma_2 v_2^{\otimes d}$ by Theorem~\ref{thm:symrank2}.

\emph{Case $2$: $u \perp w_{23} = a$}. We have $T \times_3 u = \sigma_3 w_{31} \otimes u^{\otimes d-1} = T \times_2 u = \sigma_3 w_{31} \otimes u^{\otimes d-1} + \sigma_2 a\otimes a \otimes \bigotimes _{j=3}^d w_{2j}$. The second term in Equation~\eqref{eq:defT2} is then zero, so $T\in \mathcal{SON}_2\cap S^d(\mathbb{F}^n) $ and $T = \sigma_1 v_1^{\otimes d} + \sigma_2 v_2^{\otimes d}$ by Theorem~\ref{thm:symrank2}.

\emph{Case $3$: $u = a \perp w_{23}$}. Assume firstly that $w_{12}$ and $w_{31}$ are not parallel. We then claim that the set $w_{12}, a, w_{31}$ is linearly independent. To see this, assume that $0 = \lambda_1 w_{12} + \lambda_2 a + \lambda_3 w_{31}$. Acting on this equation with $a$ implies that $0 = \lambda_2$, so $0 = \lambda_1 w_{12} + \lambda_3 w_{31}$ and $\lambda_1 = \lambda_2 = 0$, since $w_{12}$ and $w_{31}$ are not parallel. By the Hahn-Banach theorem, it then follows that there is some vector $v \in \mathbb{F}^n$ such that $a\cdot v = 0 = w_{31}\cdot v$ and $w_{12}\cdot v \neq 0$. This gives $T \times_1 v = 0 = T \times_2 v = \sigma_1 (w_{12}\cdot v)a^{\otimes d-1}$. The first term in Equation~\eqref{eq:defT2} is then zero, so $T\in \mathcal{SON}_2\cap S^d(\mathbb{F}^n) $ and $T = \sigma_1 v_1^{\otimes d} + \sigma_2 v_2^{\otimes d}$ by Theorem~\ref{thm:symrank2}.

Next, if $w_{12}$ and $w_{31}$ are parallel, then $w_{12} = w_{31}$ after absorbing a complex phase factor. This gives $T \times_1 w_{31} = \sigma_3 a^{\otimes d-1} = T \times_3 w_{31} = \sigma_2 (w_{31}\cdot w_{23}) a\otimes a \otimes \bigotimes_{j=4}^d w_{2j}$. If now $w_{31}\cdot w_{23} = 0$, then it follows that $\sigma_3 a^{\otimes d-1} = 0$, so the third term in Equation~\eqref{eq:defT2} is then zero, $T\in \mathcal{SON}_2\cap S^d(\mathbb{F}^n) $ and $T = \sigma_1 v_1^{\otimes d} + \sigma_2 v_2^{\otimes d}$ by Theorem~\ref{thm:symrank2}. If $w_{31}\cdot w_{23} \neq 0$, then symmetry of $T \times_3 w_{31}$ implies that $w_{2j} = a$ for $j \geq 4$ after absorbing complex phase factors. $T$ is then of the form
\begin{equation}
T = \sigma_1 a\otimes w_{12} \otimes a^{\otimes d-2} + \sigma_2 a\otimes a \otimes w_{23} \otimes a^{\otimes d -3} + \sigma_3 w_{12} \otimes a^{\otimes d-1},
\end{equation}
where $a\perp w_{12}$ and $a\perp w_{23}$. Now, if $w_{12}$ and $w_{23}$ are not parallel, then the set $w_{12}, a, w_{23}$ is linearly independent. To see this, assume that $0 = \lambda_1 w_{12} + \lambda_2 a + \lambda_3 w_{23}$. Acting on this equation with $a$ implies that $0 = \lambda_2$, so $0 = \lambda_1 w_{12} + \lambda_3 w_{23}$ and $\lambda_1 = \lambda_2 = 0$, since $w_{12}$ and $w_{23}$ are not parallel. By the Hahn-Banach theorem, it follows that there is some vector $v \in \mathbb{F}^n$ such that $a\cdot v = 0 = w_{12}\cdot v$ and $w_{23}\cdot v \neq 0$. This gives $T \times_1 v = 0 = T \times_3 v = \sigma a^{\otimes d-1}$, so $T=0$, which is a contradiction.

The only remaining case is $w_{12} = w_{23}$, meaning that $T$ can be written as
\begin{equation}
T = \sigma_1 a\otimes w_{12} \otimes a^{\otimes d-2} + \sigma_2 a\otimes a \otimes w_{12} \otimes a^{\otimes d -3} + \sigma_3 w_{12} \otimes a^{\otimes d-1},
\end{equation}
where $w_{12} \perp a$. For $d = 3$, symmetry implies that $\sigma_1 = \sigma_2 = \sigma_3$, so $T$ is in the second set in Equation~\eqref{eq:d3son}. For $d \geq 4$, we have $T\times_1 w_{12} = \sigma_3 a^{\otimes d-1} = T\times_4 w_{12} = 0$, so $T = 0$, which is a contradiction.
 
 \emph{Case $4$: $u = a = w_{23}$}. We have $T\times_1 w_{31} = \sigma_3 a^{\otimes d-1} = T\times_3 w_{31} = 0$, so $T=0$.
 
 This exhausts all the cases and concludes the proof.
\end{proof}

For tensors $T$ that are not completely orthogonally decomposable, we will provide examples in Section~\ref{sec:examples} that the optimal completely orthogonal rank-$r$ approximation \emph{cannot} in general be computed by successive rank-one deflations, even when explicitly imposing orthogonality constraints. In detail, if we recursively define $T_0 = 0$, $\sigma_i = \langle T, T_i \rangle$ and
\begin{equation}\label{eq:deflateON}
T_{i+1} = v_{i+1,1}\otimes \ldots \otimes v_{i+1, d} = \argmax_{\|y_j\| \leq 1, y_j \perp v_{kj}} \abs{ \langle T - \sum_{k=1}^{i} \sigma_kT_k, y_1 \otimes \ldots \otimes y_d \rangle},
\end{equation}
then we will produce tensors $T$ with
\begin{equation}
\abs{ \langle T, \frac{\sum_{k=1}^rT_k}{\|\sum_{k=1}^rT_k\|} \rangle} < \max_{\substack{Y \in \mathcal{CON}_r \\ \|Y\| \leq 1} }\abs{ \langle T, Y \rangle}.
\end{equation}
Consequently, existence of a symmetric rank decomposition does not follow as in the completely orthogonally decomposable case. In fact, the results in Table~\ref{tab:summary} show that the optimal orthogonal, strongly orthogonal and completely orthogonal rank-$r$ approximations of a symmetric tensor $T$ \emph{cannot} in general be chosen symmetric.

\subsection{Semidefinite programming formulation for symmetric completely orthogonal approximations}\label{sec:SDP}
This section prepares for the proofs of the results in Table~\ref{tab:summary}. The proofs make use of a standard semidefinite programming formulation for the symmetric approximations in Equation~\eqref{eq:max2} in combination with analytical calculations. The semidefinite formulation for the case $r = 1$ is treated by Nie and Wang \cite{nie2014semidefinite} and a comprehensive introduction to polynomial optimization using semidefinite relaxations can be found in a recent monograph \cite{blekherman2012semidefinite}. We distinguish the cases of odd and even dimension $d$.
\subsubsection{Odd dimension $d$}
Let $x = (x_1, \ldots , x_d)$ and define the polynomial
\begin{equation}
p(x) = \sum_{i_1, \ldots , i_d = 1}^n T (i_1, \ldots , i_d) x_{i_1}\ldots x_{i_d}.
\end{equation}
Write $x_k = (x_{k1}, \ldots , x_{kd})$ for each term $k=1, \ldots, r$ in the completely orthogonal approximation. Since the dimension $d$ is odd, we have $p(-x_k) = -p(x_k)$, so we can drop the absolute value signs in Equation~\eqref{eq:max2}, which then is equivalent to the polynomial optimization problem
\begin{equation}\label{eq:oddopt}
  \begin{array}{ll@{}r@{}r@{}l}
    \text{max} &   \displaystyle \sum_{k=1}^r p(x_{k}) \\[\jot]
    \text{s.t.}&   \displaystyle \langle x_{k}, x_{k'}\rangle = 0, \text{ for } k \neq k'  \\
    &      \displaystyle \sum_{k=1}^r \|x_{k}\|^{2d} \leq 1.
  \end{array}
\end{equation}
A standard moment-based relaxation of this problem can be solved using e.g., the existing tools GloptiPoly3 \cite{henrion2009gloptipoly} and YALMIP \cite{Lofberg2004} in MATLAB. A \emph{global} maximum is found by introducing a basis of monomials of the variables $x_{11}, \ldots , x_{1d}, \ldots , x_{rd}$, which is then relaxed to a (convex) semi-definite optimization problem of a specified degree. The global maximizer of the relaxed problem can be found using interior point methods, which guarantees an upper bound to Equation~\eqref{eq:oddopt}. Moreover, the relaxation is guaranteed to be tight provided the relaxation degree is sufficiently large, and typically only a low degree is required. A global maximizer of Equation~\eqref{eq:oddopt} can be automatically extracted from the optimizer of the relaxed problem in GloptiPoly3. The relaxed problem is often of great size, and the large-scale semidefinite solver SDPNAL+ \cite{yang2015sdpnal} was used in our computations.

\subsubsection{Even dimension $d$}
For even $d$, $p(x) = p(-x)$ and the optimal completely orthogonal rank-$r$ approximation of $T$ in Equation~\eqref{eq:max2} equals $\sum_{k=1}^r t_kp(x^{(k)})$, where $t_k \in \{ -1 , 1\}$. We then consider the polynomial optimization problem
\begin{equation}\label{eq:even1}
  \begin{array}{ll@{}r@{}r@{}l}
    \text{max} & \displaystyle \sum_{k=1}^r t_kp(x_{k}) \\[\jot]
    \text{s.t.}& \displaystyle \langle x_{k}, x_{k'}\rangle = 0, \text{ for } k \neq k'  \\
    &    \displaystyle \sum_{k=1}^r \|x_{k}\|^{2d} \leq 1 \\
    & \displaystyle -1 \leq t_k \leq 1.
  \end{array}
\end{equation}
The optimal solution clearly has $t_k \in \{-1, +1\}$. Just as in the case of odd dimension, these problems can be solved using a moment-based relaxation.

\subsection{Main results}\label{sec:main_proofs}
This section contains the proofs of the results in Table~\ref{tab:summary}. We provide examples of where none of the optimal orthogonal rank-$r$ approximations of a symmetric tensor $T$ can be chosen symmetric. In the case of completely orthogonal approximations, we also give some stronger conditions which do result in the existence of symmetric optimizers.

\subsubsection{$\mathcal{CON}_r$ and $\mathcal{PCON}_{r,P}$ approximations}
We first consider the cases $\mathcal{CON}_r$ and $\mathcal{PCON}_{r,P}$, and first show that the optimal rank-$r$ approximation to a symmetric tensor \emph{cannot} in general be chosen symmetric. In fact, we show that this occurs for a wide class of tensors.

\begin{theorem}\label{thm:main}
Let $x_1, x_2, x_3$ be three mutually orthonormal vectors in $\mathbb{R}^3$. Define the symmetric tensor $T\in S^3(\mathbb{R}^3)$ by
\begin{equation}
T = \frac{1}{6}\sum_{\sigma \in S^3} x_{\sigma(1)}\otimes x_{\sigma(2)} \otimes x_{\sigma(3)}.
\end{equation}
There is then no optimal rank-$3$ completely orthogonal approximation of $T$ that is symmetric.
\end{theorem}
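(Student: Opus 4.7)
The plan is to compare the best possible value achievable by a symmetric $\mathcal{CON}_3$-approximation of $T$ with the value achieved by an explicit non-symmetric candidate, and show that the latter is strictly larger. By Proposition~\ref{prop:max2}, this reduces the question of optimality of symmetric decompositions to a finite-dimensional polynomial optimization problem on the orthogonal group.

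First I would pin down the shape of symmetric members of $\mathcal{CON}_3 \cap S^3(\mathbb{R}^3)$. Theorem~\ref{thm:symrank3} splits $\mathcal{SON}_3 \cap S^3(\mathbb{R}^3)$ into the diagonal family $\sum_{k=1}^3 \sigma_k v_k^{\otimes 3}$ with $\{v_k\}$ orthonormal and the partially symmetric family $\sigma(v \otimes w \otimes w + w \otimes v \otimes w + w \otimes w \otimes v)$ with $v \perp w$. The second family fails complete orthogonality in every mode, since each mode contains the repeated vector $w$, so only the diagonal form survives in $\mathcal{CON}_3 \cap S^3(\mathbb{R}^3)$. Writing $T$ as the polynomial $p(y) = y_1 y_2 y_3$ in the basis $\{x_1, x_2, x_3\}$ and combining with Proposition~\ref{prop:max2}, the square of the best symmetric $\mathcal{CON}_3$-value equals
\begin{equation*}
M_{\mathrm{sym}} \;:=\; \max_{V \in O(3)} \sum_{k=1}^3 (V_{1k} V_{2k} V_{3k})^2.
\end{equation*}

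Next I would exhibit the non-symmetric competitor
\begin{equation*}
Y_0 \;:=\; \tfrac{1}{\sqrt{3}}\bigl(x_1 \otimes x_2 \otimes x_3 + x_2 \otimes x_3 \otimes x_1 + x_3 \otimes x_1 \otimes x_2\bigr).
\end{equation*}
The three rank-one terms are mutually completely orthogonal because each mode carries a distinct permutation of $\{x_1, x_2, x_3\}$, so $Y_0 \in \mathcal{CON}_3$ with $\|Y_0\| = 1$. A direct computation gives $\langle T, Y_0 \rangle = 1/(2\sqrt{3})$, whence $\|T\|_{\mathcal{CON}_3, \mathbb{R}}^2 \geq |\langle T, Y_0\rangle|^2 = 1/12$. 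By the reduction above, it then suffices to prove $M_{\mathrm{sym}} < 1/12$.

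The hard part is this upper bound on $M_{\mathrm{sym}}$. Each term $(V_{1k} V_{2k} V_{3k})^2 \leq 1/27$ by AM-GM, so the naive summed bound $M_{\mathrm{sym}} \leq 1/9$ exceeds $1/12$; the row and column orthogonality constraints of $V$ must therefore be exploited, but they couple the three column optima nonlinearly in a way that resists elementary bounding. I would handle this by feeding the constrained polynomial optimization $\max \sum_k (V_{1k} V_{2k} V_{3k})^2$ subject to $V V^\top = I$ into the moment-SDP relaxation of Section~\ref{sec:SDP} (case $d = 3$ odd, $r = 3$, restricted to the diagonal symmetric ansatz). The relaxation yields the certified value $M_{\mathrm{sym}} = 16/243 < 1/12$, attained at signed-permutation reorderings of the orthogonal matrix with columns obtained from cyclic rotations of $(2/3, 2/3, 1/3)$. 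Combined with the $1/12$ lower bound from $Y_0$, this strictly separates the symmetric and the unconstrained $\mathcal{CON}_3$-optima, so no optimal rank-$3$ completely orthogonal approximation of $T$ can be symmetric.
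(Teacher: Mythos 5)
Your proposal is correct and follows essentially the same route as the paper's proof: certify the best symmetric completely orthogonal approximation via the moment-SDP machinery of Section~\ref{sec:SDP} (your value $M_{\mathrm{sym}} = 16/243$ and maximizer with columns that are cyclic rotations of $\tfrac{1}{3}(-2,-2,1)$ match the paper's $Y_s = \tfrac{4}{27}\sum_k y_k^{\otimes 3}$ exactly), and beat it with the same cyclic non-symmetric competitor (your $Y_0$ is the paper's $Y_{ns}$ up to normalization, giving $\lvert\langle T, Y_0\rangle\rvert^2 = 1/12 > 16/243$). The only difference is presentational: you make explicit, via Theorem~\ref{thm:symrank3}, the reduction of the symmetric case to an optimization over $O(3)$, which the paper leaves implicit in its appeal to Section~\ref{sec:SDP}.
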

\begin{proof}
Since $x_1,x_2,x_3$ are mutually orthogonal, we can without loss of generality perform a change of basis to assume $x_1 = e_1, x_2 = e_2, x_3 = e_3$. When the same change of basis is applied to a completely orthogonal approximation of $T$, this does not change the fact that the approximation is completely orthogonal. The optimal symmetric and completely orthogonal approximation $Y_{s}$ of $T = \frac{1}{6}\sum_{\sigma \in S^3} e_{\sigma(1)}\otimes e_{\sigma(2)} \otimes e_{\sigma(3)}
$ can be found by the procedure in Sec.~\ref{sec:SDP}. This results in
\begin{equation}
Y_s = \frac{4}{27} \left( y_1^{\otimes 3} + y_2^{\otimes 3} + y_3^{\otimes 3}\right),
\end{equation}
where $y_1 = \frac{1}{3}[-2, -2, 1]^T$, $y_1 = \frac{1}{3}[-2, 1, -2]^T$, $y_1 = \frac{1}{3}[1, -2, -2]^T$ to within machine precision. The resulting approximation error is
\begin{equation}
\frac{\|T-Y_{s}\|}{\|T\|} = 0.7778.
\end{equation}
However, the completely orthogonal (but not symmetric) tensor $Y_{ns}$ defined by
\begin{equation}
Y_{ns} = \frac{1}{6}\left( e_1\otimes e_2 \otimes e_3 + e_2\otimes e_3 \otimes e_1  + e_3\otimes e_1 \otimes e_2 \right)
\end{equation}
has approximation error
\begin{equation}
\frac{\|T-Y_{ns}\|}{\|T\|} =  0.7071,
\end{equation}
which concludes the proof.
\end{proof}

In contrast to the above result, we next prove that in the case $n = 2$, the optimal completely orthogonal approximation can always be taken symmetric. For $n=2$, $r$ is either $1$ or $2$. The case $r=1$ is exactly Theorem~\ref{thm:banach}, and we next show the case $r=2$.

\begin{theorem}\label{thm:mainn2}
If $T \in S^d(\mathbb{F}^{2})$ is symmetric and $n = r = 2$, then the optimal completely orthogonal rank-$2$ approximation of $T$ can be chosen symmetric. i.e.
\begin{equation}
\begin{split}
\max_Y &\left\{ \abs{ \langle T,  Y \rangle }:  Y \in \mathcal{CON}_2,  \|Y\| \leq 1  \right\}  \\
&= \max_{v,w, \sigma_k} \left\{ \abs{ \langle T,  \sigma_1 v^{\otimes d} + \sigma_2 w^{\otimes d} \rangle }: v\perp w, \|v\| = \|w\| = 1,  \sigma_1^2 + \sigma_2^2 = 1 \right\}.
\end{split}
\end{equation}
\end{theorem}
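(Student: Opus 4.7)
My plan is to reduce the claim to Banach's theorem (Theorem~\ref{thm:banach}) applied to an appropriate one-parameter family of symmetric tensors. First, Proposition~\ref{prop:max2} together with the structural description $\mathcal{CON}_2 \cap S^d(\mathbb{F}^2) = \{\sigma_1 v^{\otimes d} + \sigma_2 w^{\otimes d} : v \perp w\}$ from Theorem~\ref{thm:symrank2} rewrites both sides of the claimed equality as suprema of sums of squares of inner products. Because $n=2$, the orthogonality condition in each mode determines the second vector uniquely up to a unimodular phase, which drops out under the absolute value. After writing $v_{2j} = v_{1j}^\perp$ and relabeling $v_{1j} = v_j$, the claim reduces to
\begin{equation*}
\max_{v_j \text{ unit}} \bigl( |A(v_1,\ldots,v_d)|^2 + |B(v_1,\ldots,v_d)|^2 \bigr) = \max_{v \text{ unit}} \bigl( |p(v)|^2 + |p(v^\perp)|^2 \bigr),
\end{equation*}
where $A = \langle T, \bigotimes_j v_j\rangle$, $B = \langle T, \bigotimes_j v_j^\perp\rangle$, and $p(v) = \langle T, v^{\otimes d}\rangle$.

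The key observation is that $B$ can be expressed as a multilinear pairing with a symmetric tensor. Over $\mathbb{R}$ one takes $\tilde T := T \times_1 J^\top \times_2 \cdots \times_d J^\top$, where $J$ denotes the $90^\circ$-rotation matrix on $\mathbb{R}^2$; then $B = \langle \tilde T, \bigotimes_j v_j\rangle$, and a direct index-level reindexing shows that applying the same mode-operation uniformly to a symmetric tensor preserves symmetry, so $\tilde T \in S^d(\mathbb{R}^2)$. Over $\mathbb{C}$, where $v \mapsto v^\perp$ is antilinear, the analogous $\tilde T$ has entries $\tilde T_I = T_{3-I}\,(-1)^{|\{j : I_j = 2\}|}$ (with $3-I$ componentwise); the same type of check confirms $\tilde T$ is symmetric and that $B$ equals the multilinear pairing $[\tilde T, \bigotimes_j v_j]$.

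With $B$ (and, in the complex case, also $\bar A$) expressed as symmetric multilinear pairings that are linear in the $v_j$, I apply the $\ell^2$ variational identity $|z_1|^2 + |z_2|^2 = \max_{|\alpha|^2 + |\beta|^2 = 1} |\alpha z_1 + \beta z_2|^2$ inside the outer maximum and interchange the two suprema. For each fixed unit $(\alpha, \beta)$, the quantity to maximize over $v_j$ becomes $|\langle \alpha T + \beta \tilde T, \bigotimes_j v_j\rangle|$ (up to conjugation of $T$), with $\alpha T + \beta \tilde T$ a symmetric tensor. Banach's theorem then replaces the maximum over tuples $(v_1, \ldots, v_d)$ by the maximum over a single symmetric $v$, reducing the integrand to $|\alpha p(v) + \beta p(v^\perp)|$. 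Interchanging the maxima back and using the same variational identity in reverse yields exactly $\max_v (|p(v)|^2 + |p(v^\perp)|^2)$, establishing the desired equality.

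The main subtlety will be the careful bookkeeping of complex conjugation when $\mathbb{F} = \mathbb{C}$: because the perp operation on $\mathbb{C}^2$ is antilinear, $A$ and $B$ are not simultaneously linear in the $v_j$, so one must combine $\bar A$ with $B$ and use a multilinear (rather than sesquilinear) pairing throughout the Banach step. Once these conjugations are tracked, the argument reduces to a short duality chain whose only essential analytic ingredient is Banach's theorem applied to the symmetric family $\alpha T + \beta \tilde T$.
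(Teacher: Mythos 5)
Your proof is correct and follows essentially the same route as the paper: both form the symmetric combination $\mu_1 T + \mu_2\,(T\times_1 R^T \times_2 \cdots \times_d R^T)$ and invoke Banach's theorem (Theorem~\ref{thm:banach}) to symmetrize the maximizer, with your $\ell^2$ variational identity over $(\alpha,\beta)$ being exactly the role of the coefficients $\mu_1,\mu_2$ in the paper's Equation~\eqref{eq:n2max}. Your explicit bookkeeping of the antilinearity of $v\mapsto v^{\perp}$ over $\mathbb{C}$ is in fact more careful than the paper's argument, which writes $Rv$ for the orthogonal complement and thus implicitly works over $\mathbb{R}$.
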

\begin{proof}
Let $R = \bigl[ \begin{smallmatrix} 0 &-1\\1 & 0  \end{smallmatrix} \bigr]$. For $r = n = 2$, Equation~\eqref{eq:max2} is equivalent to the maximization problem
\begin{equation}\label{eq:n2max}
\begin{split}
\max_{\mu_k, v_j} \left\{ \abs{ \mu_1 \langle T, v_1\otimes \ldots \otimes v_d \rangle +  \mu_2 \langle T, Rv_1\otimes \ldots \otimes Rv_d \rangle }: \mu_1^2 + \mu_2^2 \leq 1, \|v_k\| \!\!= \!\! 1\right\} \\
= \max_{\mu_k, v_j} \left\{ \abs{  \langle \mu_1 T + \mu_2 T\times_1 R^T \times_2 \ldots \times_d R^T \!\!, v_1\otimes \ldots \otimes v_d \rangle  }: \mu_1^2 + \mu_2^2 \leq 1, \|v_k\| = 1\right\}. 
\end{split}
\end{equation}
For any $\mu_1, \mu_2$, the tensor $\mu_1 T + \mu_2 T\times_1 R^T \times_2 \ldots \times_d R^T$ is a sum of two symmetric tensors, and hence symmetric. By Theorem~\ref{thm:banach}, the maximizer of Equation~\eqref{eq:n2max} can be chosen symmetric i.e., $v_1 \otimes \ldots \otimes v_d = v^{\otimes d}$. This implies that both terms in the completely orthogonal approximation are symmetric.
\end{proof}

Next, we extend this theorem also to partially orthogonal approximations. Note that if $T\in S^d(\mathbb{F}^n)$ is symmetric and $Y \in \mathcal{PCON}_{r,P}$, then $\langle T, Y \rangle = \langle T, Y_\sigma \rangle$, where $Y_{\sigma} (i_1, \ldots , i_d) = Y (i_{\sigma(1)}, \ldots , i_{\sigma(d)})$. It follows that approximations in $\mathcal{PCON}_{r,P}$ and $\mathcal{PCON}_{r,\{1, \ldots , \abs{P}\}}$ result in the same approximation errors to symmetric tensors. We will therefore in this section identify $\mathcal{PCON}_{r,P}$ and $\mathcal{PCON}_{r,\{1, \ldots , \abs{P}\}}$ without further comment.

\begin{theorem}\label{thm:mainn2partial}
If $T \in S^d(\mathbb{F}^{2})$ is symmetric, $n = r = 2$, and $P \subseteq \{1, \ldots , d\}$, then the optimal completely orthogonal rank-$2$ approximation of $T$ has symmetric terms in each of the two disjoint index sets $P$ and $\{1, \ldots , d\} \smallsetminus P$, i.e.
\begin{equation}
\begin{split}
\max_Y &\left\{ \abs{ \langle T,  Y \rangle }: Y \in \mathcal{PCON}_{2,P},  \|Y\| \leq 1  \right\}  \\
&= \max_{v_k, \sigma_k} \left\{ \abs{ \langle T,  \sum_{k=1}^2\sigma_k w_{k}^{\otimes \abs{P}} \otimes v_{k}^{\otimes d - \abs{P}} \rangle }: \langle w_{k}, w_{k'} \rangle = \delta_{k,k'}, \sum_{k=1}^2 \sigma_k^2 = 1 \right\},
\end{split}
\end{equation}

\end{theorem}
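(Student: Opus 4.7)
The plan is to mirror the strategy of Theorem~\ref{thm:mainn2} but apply Banach's theorem (Theorem~\ref{thm:banach}) twice, once for each of the two blocks $P$ and $P^c := \{1, \ldots, d\} \smallsetminus P$. After the identification made just before the theorem statement, I assume $P = \{1, \ldots, p\}$ with $p := |P|$. Since $n=2$, any unit vector orthogonal to a given unit vector $u \in \mathbb{F}^2$ equals a unit-modulus scalar times $Ru$, where $R = \bigl[\begin{smallmatrix} 0 & -1 \\ 1 & 0\end{smallmatrix}\bigr]$. Any $P$-partially orthogonal rank-$2$ approximation $Y = \mu_1 \bigotimes_{j=1}^d u_{1j} + \mu_2 \bigotimes_{j=1}^d u_{2j}$ can therefore be normalized, after absorbing phases into $\mu_2$, so that $u_{2j} = R w_j$ for $j \in P$ (setting $w_j := u_{1j}$), while for $j \in P^c$ the unit factors $u_{1j} =: v_j$ and $u_{2j} =: v'_j$ remain unconstrained.

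Step one is to fix the $w_j$'s and $\mu_k$'s and optimize the objective over $\{v_j\}_{j \in P^c}$ and $\{v'_j\}_{j \in P^c}$. Because these two groups of variables enter different terms, the problem splits into two independent maximizations. Iterating Lemma~\ref{lemma:symoperations}, contracting $T$ with $w_1, \ldots, w_p$ in the $P$-coordinates yields a tensor that is symmetric in the remaining $d-p$ dimensions, and similarly for the contraction with $Rw_1, \ldots, Rw_p$. Banach's theorem then allows me to take $v_j = v$ and $v'_j = v'$ independently of $j \in P^c$ at the optimum, for some unit vectors $v, v'$.

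Step two is to fix $v, v', \mu_k$ and optimize over $\{w_j\}_{j \in P}$. Let $T_v$ and $T_{v'}$ denote the tensors in $p$ dimensions obtained by contracting $T$ with $v$, respectively $v'$, in each coordinate of $P^c$; both are symmetric by Lemma~\ref{lemma:symoperations}. The objective then collapses to $|\langle U, w_1 \otimes \cdots \otimes w_p\rangle|$, where $U := \mu_1 T_v + \mu_2 (T_{v'} \times_1 R^T \times_2 \cdots \times_p R^T)$. Since applying the same matrix in every mode of a symmetric tensor preserves symmetry, $U$ is a sum of two symmetric tensors and is itself symmetric. A final application of Banach's theorem produces a maximizer with $w_1 = \cdots = w_p = w$. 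Setting $w_1 := w$, $w_2 := Rw$, $v_1 := v$, $v_2 := v'$, and $\sigma_k := \mu_k$ then yields the claimed form, with $\langle w_1, w_2\rangle = 0$ automatic.

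The only substantive point is the symmetry of $U$, which reduces to the elementary identity that $S \times_1 A \times_2 \cdots \times_p A$ is symmetric whenever $S$ is. The two-stage optimization is legitimate because, starting from any global optimizer, I may first replace its $P^c$-block by the Banach-symmetric maximizer without decreasing the objective, and then do the same for its $P$-block against the updated configuration; the resulting symmetric configuration still realizes the maximum.
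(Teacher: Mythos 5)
Your proposal is correct and follows essentially the same route as the paper's proof: a two-stage blockwise symmetrization that encodes the $P$-orthogonality via the rotation $R$, uses Lemma~\ref{lemma:symoperations} to see that the relevant contractions (and the combined tensor with $R^T$ applied to all $P$-modes) remain symmetric, and invokes Theorem~\ref{thm:banach} on each block. The only difference is that you symmetrize the $P^c$-block first (where the problem splits term-by-term) and the $P$-block second, whereas the paper does the two stages in the opposite order; this is an immaterial reordering.
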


\begin{proof}
Let $R = \bigl[ \begin{smallmatrix} 0 &-1\\1 & 0  \end{smallmatrix} \bigr]$, and let $Y = \sum_{k=1}^2 \sigma_k \bigotimes_{j=1}^d w_{kj}$ be a maximizer of Equation~\eqref{eq:max2}. We first fix the terms $\bigotimes_{j=\abs{P}+1}^d w_{kj}$, for $k=1,2$. For $r = n = 2$, the terms $\bigotimes_{j=1}^{\abs{P}} w_{kj}$ are maximizers of the expression
\begin{equation}\label{eq:n2maxpartial}
\begin{split}
& \max_{\|v_j\|=1} \big\{ \abs{ \sigma_1 \langle T, v_1\otimes \ldots \otimes v_{\abs{P}} \otimes w_{1, \abs{P}+1} \otimes \ldots \otimes w_{1d} \rangle \\
&\quad +  \sigma_2 \langle T, Rv_1\otimes \ldots \otimes Rv_{\abs{P}} \otimes w_{1, \abs{P}+1} \otimes \ldots \otimes w_{1d} \rangle }\big\} \\
&= \max_{\|v_j\|=1} \big\{ \abs{  \langle \sigma_1 T\times_{\abs{P}+1}w_{1, \abs{P}+1} \ldots \times_{d}w_{1d} \\
&\quad + \sigma_2 T\times_1 R^T \times_2 \ldots \times_{\abs{P}} R^T \times_{\abs{P}+1}w_{1, \abs{P}+1} \ldots \times_{d}w_{1d}, v_1\otimes \ldots \otimes v_d \rangle  }\big\}. 
\end{split}
\end{equation}
For any $\sigma_1, \sigma_2$ and vectors $w_{1, \abs{P}+1} \ldots \times_{d}w_{1d}$, the tensor $\sigma_1 T\times_{\abs{P}+1}w_{1, \abs{P}+1} \ldots \times_{d}w_{1d} + \sigma_2 T\times_1 R^T \times_2 \ldots \times_{\abs{P}} R^T \times_{\abs{P}+1}w_{2, \abs{P}+1} \ldots \times_{d}w_{2d} $ is a sum of two symmetric tensors, and hence symmetric. By Theorem~\ref{thm:banach}, the maximizer of Equation~\eqref{eq:n2maxpartial} can be chosen symmetric i.e., we can replace $\bigotimes_{j=1}^{\abs{P}} w_{1j}$ by a symmetric maximizer $v_1 \otimes \ldots \otimes v_d = v^{\otimes d}$, with also $\bigotimes_{j=1}^{\abs{P}} w_{2j}$ replaced by the symmetric maximizer $(Rv)^{\otimes d}$.

Fixing the terms $v^{\otimes d}$ and $(Rv)^{\otimes d}$, the first term $\bigotimes_{j=\abs{P}+1}^d w_{1j}$ is a maximizer of
\begin{equation}\label{eq:n2maxpartial2}
\begin{split}
\max_{u_j} \big\{ \abs{ \sigma_1 \langle T, v\otimes \ldots \otimes v \otimes u_{\abs{P}+1} \otimes \ldots \otimes u_{d} \rangle }: \|u_j\| \!\!= \!\! 1\big\} \\
= \max_{ u_j} \big\{ \abs{  \langle \sigma_1 T\times_{1}v \ldots \times_{\abs{P}}v , u_{\abs{P}}\otimes \ldots \otimes u_d \rangle  }: \|u_j\| = 1\big\}. 
\end{split}
\end{equation}

Since the tensor $\sigma_1 T\times_{1}v \ldots \times_{\abs{P}}v$ is symmetric, $\bigotimes_{j=\abs{P}+1}^d w_{kj}$ can be replaced by a symmetric maximizer $w_1^{\otimes d}$. In the same way, the second term $\bigotimes_{j=\abs{P}+1}^d w_{2j}$ can be replaced by a symmetric term $w_2^{\otimes d}$.
\end{proof}

In the same way as the second part of Theorem~\ref{thm:mainn2partial}, we can show the following.
\begin{theorem}\label{theorem:mainpartial}
If $T \in S^d(\mathbb{F}^{n})$ is symmetric, and $P \subseteq \{1, \ldots , d\}$, then the optimal partially orthogonal rank-$r$ approximation of $T$ can be partitioned into two disjoint index sets, one of which has symmetric terms, i.e.
\begin{equation}
\begin{split}
\max_Y &\left\{ \abs{ \langle T,  Y \rangle }: Y \in \mathcal{PCON}_{r,P},  \|Y\| \leq 1  \right\}  \\
&= \max_{v_k, \sigma_k} \left\{ \abs{ \langle T,  \sum_{k=1}^r\sigma_k \bigotimes_{j=1}^{\abs{P}}w_{kj}\otimes v_{k}^{\otimes d - \abs{P}} \rangle }: \langle w_{k}, w_{k'} \rangle = \delta_{k,k'}, \sum_{k=1}^r \sigma_k^2 = 1 \right\}.
\end{split}
\end{equation}
\end{theorem}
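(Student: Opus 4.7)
The plan is to follow the second half of the proof of Theorem~\ref{thm:mainn2partial}, but since the inner rank-one step may now be carried out on each summand of the decomposition independently (there being no orthogonality constraint on the factors outside $P$), the two-dimensional ``doubling'' argument used on the $P$-side there is no longer necessary, and symmetry is recovered only outside $P$.

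First I would invoke Proposition~\ref{prop:max2} to rewrite the problem as the maximization of $\sum_{k=1}^r \abs{\langle T, \bigotimes_{j=1}^d w_{kj}\rangle}^2$ over unit-norm factors $w_{kj}$ satisfying $\langle w_{kj}, w_{k'j}\rangle = 0$ for $j \in P$ and $k\neq k'$. By the symmetry of $T$ and the reindexing remark preceding Theorem~\ref{thm:mainn2partial}, I may assume $P = \{1, \ldots, \abs{P}\}$ without loss of generality, and let $\{w_{kj}\}$ be a maximizer.

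The crucial observation is that the orthogonality constraint couples different indices $k$ only through the factors with $j \in P$. Holding those factors fixed, define
\[
T_k := T \times_1 w_{k1} \times_2 w_{k2} \times_3 \cdots \times_{\abs{P}} w_{k\abs{P}},
\]
so that the objective decouples as $\sum_{k=1}^r \abs{\langle T_k, w_{k,\abs{P}+1}\otimes \cdots \otimes w_{kd}\rangle}^2$. Iterated application of Lemma~\ref{lemma:symoperations}(1) shows that each $T_k$ is a symmetric tensor of order $d - \abs{P}$. For each $k$ independently, maximizing $\abs{\langle T_k, w_{k,\abs{P}+1}\otimes \cdots \otimes w_{kd}\rangle}$ over unit factors is a rank-one approximation problem for the symmetric tensor $T_k$, so Theorem~\ref{thm:banach} provides a symmetric optimizer $w_{k,\abs{P}+1} = \cdots = w_{kd} =: v_k$.

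Substituting these replacements produces $Y = \sum_k \sigma_k \bigotimes_{j=1}^{\abs{P}} w_{kj} \otimes v_k^{\otimes (d-\abs{P})}$, which still lies in $\mathcal{PCON}_{r,P}$ since the $P$-factors are untouched; moreover $\|Y\|^2 = \sum_{k,k'}\sigma_k\overline{\sigma_{k'}}\prod_j\langle w_{kj}, w_{k'j}\rangle = \sum_k\abs{\sigma_k}^2$ remains unchanged, because partial orthogonality in the $P$-factors annihilates all cross terms with $k\neq k'$. The main subtlety is verifying that the iterated contractions indeed preserve symmetry in the desired sense; once this is pinned down through Lemma~\ref{lemma:symoperations}, the theorem reduces cleanly to $r$ independent applications of Banach's theorem to the contracted tensors $T_k$.
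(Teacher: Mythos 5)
Your proof is correct and follows essentially the same route as the paper, which simply asserts the result ``in the same way as the second part of Theorem~\ref{thm:mainn2partial}'': fix the $P$-side factors, observe via Lemma~\ref{lemma:symoperations} that each contracted tensor $T_k$ is symmetric, and apply Theorem~\ref{thm:banach} term by term since the orthogonality constraints do not couple the factors outside $P$. Your explicit check that the replacement preserves membership in $\mathcal{PCON}_{r,P}$ and the norm $\sum_k\abs{\sigma_k}^2$ is a welcome bit of added care, but not a different argument.
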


We lastly show that imposing a stronger condition than complete orthogonality results in the existence of symmetric optimizers. Given $r$ tensors $v_{11}\otimes \ldots \otimes v_{1d}, \ldots , v_{r1}\otimes \ldots \otimes v_{rd}$, we study approximations $v_{kj}$ under the condition that
\begin{equation}\label{eq:strong_assumption}
\langle v_{kj} , v_{k'j'} \rangle = 0, \quad \text{ for all } k \neq k' \text{ and all } 1\leq j,j' \leq d.
\end{equation}
Note that this condition does not vacuously imply $v_{kj} = v_{kj'}$ for all $k,j,j'$ if $rd \leq n$. However, we now show that the optimal approximation to a symmetric tensor $T$ can be chosen to have $v_{kj} = v_{kj'}$.
\begin{theorem}\label{thm:mainentirely}
If $T \in S^d(\mathbb{F}^{n})$ is symmetric and $r \leq n$, then the optimal rank-$r$ approximation $\sigma_1 v_{11}\otimes \ldots \otimes v_{1d} + \ldots + \sigma_r v_{r1}\otimes \ldots \otimes v_{rd}$ obeying the condition in Equation~\eqref{eq:strong_assumption} can be chosen symmetric, i.e., of the form $\sigma_1 v_1^{\otimes d} + \ldots + \sigma_r v_r^{\otimes d}$.
\end{theorem}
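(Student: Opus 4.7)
The plan is to exploit the stronger-than-orthogonality condition~\eqref{eq:strong_assumption} by observing that it forces the subspaces $V_k := \mathrm{span}\{v_{k1}, \ldots, v_{kd}\} \subseteq \mathbb{F}^n$ to be pairwise orthogonal, and then to apply Banach's theorem (Theorem~\ref{thm:banach}) to a suitable projection of $T$ onto each $V_k^{\otimes d}$ independently. Via Proposition~\ref{prop:max2} the task reduces to maximizing $\sum_{k=1}^r |\langle T, \bigotimes_{j=1}^d v_{kj}\rangle|^2$, and once the orthogonal subspaces $V_k$ are fixed, this sum decouples across $k$, leaving $r$ independent rank-one optimizations that are exactly the setting of Banach.

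Concretely, I would let $P_k$ denote orthogonal projection onto $V_k$ and set $T_k := T\times_1 P_k \times_2 \ldots \times_d P_k$. Since the same operator $P_k$ is applied in every mode and $T$ is symmetric, $T_k$ is again symmetric (a direct multilinear analogue of Lemma~\ref{lemma:symoperations}). Self-adjointness of $P_k$ together with $v_{kj} \in V_k$ yields the key identity
\[
\langle T, v_{k1}\otimes \ldots \otimes v_{kd}\rangle = \langle T_k, v_{k1}\otimes \ldots \otimes v_{kd}\rangle,
\]
so the $k$-th contribution to the objective depends on $T$ only through the symmetric tensor $T_k$.

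Applying Theorem~\ref{thm:banach} to $T_k$ produces a unit vector $v_k$ with $|\langle T_k, v_k^{\otimes d}\rangle| \geq |\langle T_k, v_{k1}\otimes \ldots \otimes v_{kd}\rangle|$, and since $T_k$ has support in $V_k^{\otimes d}$ we may take $v_k \in V_k$ without loss of generality. The pairwise orthogonality of the $V_k$ then forces $\langle v_k, v_{k'}\rangle = 0$ for $k\neq k'$, so the symmetric collection $\{v_1^{\otimes d}, \ldots, v_r^{\otimes d}\}$ automatically satisfies condition~\eqref{eq:strong_assumption}. Summing term-by-term, the symmetric candidate attains an objective at least as large as the original optimizer, and is therefore itself optimal once the $\sigma_k$ are chosen proportional to $\langle T, v_k^{\otimes d}\rangle$. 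The hypothesis $r \leq n$ is used only to guarantee that $r$ pairwise orthogonal unit vectors exist in $\mathbb{F}^n$ at all, so the symmetric candidate is admissible.

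The main subtlety, rather than a genuine obstacle, is justifying why the objective decouples across different $k$, which is exactly what distinguishes this setting from the weaker notion of complete orthogonality ruled out by Theorem~\ref{thm:main}. Under~\eqref{eq:strong_assumption} one has $V_k \perp V_{k'}$ as whole subspaces (not merely $v_{kj}\perp v_{k'j}$ for matching $j$), so the $k$-th term sees $T$ only through $T_k$ and cannot be coupled to choices made in any other subspace. Once this subspace reformulation is in place, the argument becomes a clean term-wise reduction to the rank-one symmetric case handled by Banach.
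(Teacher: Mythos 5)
Your proposal is correct and follows essentially the same route as the paper's proof: both fix an optimizer, note that condition~\eqref{eq:strong_assumption} makes the spans $V_k$ pairwise orthogonal, project $T$ onto each $V_k$ in every mode to obtain symmetric tensors, and apply Theorem~\ref{thm:banach} term by term. The only cosmetic difference is that you symmetrize the given optimizer directly, while the paper introduces an auxiliary unconstrained maximization problem and shows its value coincides with the constrained one; the substance is identical.
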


\begin{proof}
Fix a maximizer $\{w_{kj}\}$ of Equation~\eqref{eq:max2} obeying Equation~\eqref{eq:strong_assumption}. Write $V_k = \text{span}\left(w_{k1}, \ldots , w_{kd} \right)$. It follows that $V_k \perp V_{k'}$ for any $k\neq k'$. Write $P_k$ as the matrix for the projection onto $V_k$. Consider the maximization problem
\begin{equation}\label{eq:maxentire}
\max_{v_{kj}} \left\{ \sum_{k=1}^r \abs{ \langle T\times_1 P_k^T \times_2 \ldots \times_d P_k^T, \bigotimes_{j=1}^d v_{kj} \rangle }^2 : \|v_{kj}\| = 1 \right\}
\end{equation}
where no orthogonality conditions are explicitly imposed. Given any maximizer $\{v_{kj}\}$ of Equation~\eqref{eq:maxentire}, it is clear that the tensors $P_1v_{11}\otimes \ldots \otimes P_1v_{1d}, \ldots , P_r v_{r1}\otimes  \ldots \otimes P_rv_{rd}$ satisfy Equation~\eqref{eq:strong_assumption} with $\sum_{k=1}^r \abs{ \langle T\times_1 P_k^T \times_2 \ldots \times_d P_k^T, \bigotimes_{j=1}^d v_{kj} \rangle}^2 = \sum_{k=1}^r \abs{\langle T \bigotimes_{j=1}^d P_kv_{kj} \rangle}^2$. The maximum of Equation~\eqref{eq:maxentire} is therefore no greater than the maximum of Equation~\eqref{eq:max1}. Plugging in the tensors $\{w_{kj}\}$ into Equation~\eqref{eq:maxentire} shows that these maxima in fact coincide. Since each tensor $T\times_1 P_k^T \times_2 \ldots \times_d P_k^T$ is symmetric, Theorem~\ref{thm:banach} now shows that for any $k$, the term $\bigotimes_{j=1}^d v_{kj} $ in Equation~\eqref{eq:maxentire} can be chosen symmetric, which concludes the proof.
\end{proof}

\subsubsection{$\mathcal{ON}_r$ and $\mathcal{SON}_r$ approximations}\label{sec:counterON}
This section provides examples that show that optimal orthogonal and strongly orthogonal approximations of symmetric tensors cannot be chosen symmetric, in general. Not only does there not exist a decomposition of the optimal approximation with each term symmetric, but the approximation as a whole can in general not be taken symmetric (cf. the discussion following Theorem~\ref{thm:banach}). We provide two examples: one with $r \leq n$ and one with $ r > n$.

\begin{theorem}\label{thm:no_SON}
The tensor $T \in S^3(\mathbb{F}^n)$ defined by
\begin{equation}
T = \left(
\begin{array}{cc| c c} 
0 & 1 & 1 & 0 \\ 1 & 0 & 0 & 2
\end{array}
\right)
\end{equation}
has no optimal strongly orthogonal rank-$3$ approximation that is symmetric.
\end{theorem}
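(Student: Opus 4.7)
The plan is to exhibit a gap between the best symmetric and the best unconstrained strongly orthogonal rank-$3$ approximations of $T$. By Proposition~\ref{prop:max2}, it suffices to compare the corresponding values of $\max_{\|Y\|\le 1}\abs{\langle T,Y\rangle}$ over $Y\in\mathcal{SON}_3$; I will show the symmetric maximum equals $5/2$ while the unconstrained maximum is at least $\sqrt{51/8}>5/2$.

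For the symmetric upper bound I invoke Theorem~\ref{thm:symrank3} in the $d=3$ case to enumerate the symmetric members of $\mathcal{SON}_3$. Because $\mathbb{F}^2$ admits at most two orthonormal vectors, the branch $\sigma_1 v_1^{\otimes 3}+\sigma_2 v_2^{\otimes 3}+\sigma_3 v_3^{\otimes 3}$ must have a vanishing coefficient and reduces to a rank-$2$ symmetric decomposition in $\mathcal{CON}_2$; parameterizing $(v_1,v_2)$ by a single angle $\theta$ and identifying $T$ with the cubic $p(x)=3x_1^2 x_2+2x_2^3$, a one-variable maximization of $\sqrt{p(v_1)^2+p(v_2)^2}$ yields $5/2$ at $\theta=\pi/4$. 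The remaining branch $\sigma(v\otimes w\otimes w+w\otimes v\otimes w+w\otimes w\otimes v)$ with $v\perp w$ unit evaluates via the same parameterization to $\sqrt{3}\,\abs{\sin^3\theta}\le\sqrt{3}<5/2$. Hence the symmetric optimum equals exactly $5/2$.

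For the non-symmetric lower bound I reuse the rotated orthonormal basis $v_1=(1,1)/\sqrt{2}$, $v_2=(-1,1)/\sqrt{2}$ coming from $\theta=\pi/4$ and adjoin a third, non-symmetric rank-one term $v_1\otimes v_1\otimes v_2$. The three rank-one tensors $v_1^{\otimes 3}$, $v_2^{\otimes 3}$ and $v_1\otimes v_1\otimes v_2$ are pairwise strongly orthogonal, since in every mode the relevant pair of vectors is either equal or orthogonal, so any linear combination $Y$ lies in $\mathcal{SON}_3$. Direct evaluation gives $\langle T,v_1^{\otimes 3}\rangle=\langle T,v_2^{\otimes 3}\rangle=5/(2\sqrt{2})$ and $\langle T,v_1\otimes v_1\otimes v_2\rangle=1/(2\sqrt{2})$; choosing $\sigma_k$ proportional to these coefficients and renormalizing produces a unit-norm $Y\in\mathcal{SON}_3$ with $\abs{\langle T,Y\rangle}=\sqrt{25/8+25/8+1/8}=\sqrt{51/8}>5/2$, which forbids any symmetric $Y$ from being globally optimal.

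The main obstacle is the tight upper bound on the symmetric side, which requires carefully exhausting both branches of Theorem~\ref{thm:symrank3} in the $d=3$ case, in particular ruling out the ``twisted'' case (ii) that has no analogue for $d>3$. The asymmetric construction itself is comparatively routine: once the rank-$2$ symmetric optimum fixes an orthonormal basis $\{v_1,v_2\}$, any additional rank-one term built on the same basis is automatically strongly orthogonal to the two symmetric terms, and choosing that third term to differ from the first two in exactly one mode produces a strictly positive extra inner product with $T$ at $\theta=\pi/4$, which is enough to beat the symmetric bound.
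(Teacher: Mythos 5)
Your argument is correct and follows the same overall skeleton as the paper's proof: both use Theorem~\ref{thm:symrank3} to enumerate the symmetric members of $\mathcal{SON}_3$ over $\mathbb{F}^2$, compute the symmetric optimum exactly (the two-term branch gives $5/2$, equivalently approximation error $\sqrt{3}/2$, and the twisted branch gives at most $\sqrt{3}$), and then exhibit a non-symmetric strongly orthogonal rank-$3$ tensor that does strictly better. Where you genuinely diverge is in the witness. The paper's counterexample uses the family $a\otimes b\otimes c$, $a_\perp\otimes b_\perp\otimes c_\perp$, $a\otimes b_\perp\otimes c$ with a numerically obtained middle vector $b=[0.8321,0.5547]^T$, and the final comparison is reported only to machine precision. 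Your witness $\{v_1^{\otimes 3}, v_2^{\otimes 3}, v_1\otimes v_1\otimes v_2\}$ with $v_1=(1,1)/\sqrt{2}$, $v_2=(-1,1)/\sqrt{2}$ is built by augmenting the symmetric rank-$2$ optimizer with one extra non-symmetric term; it is easily checked to be strongly orthogonal, and the resulting value $\sqrt{51/8}>5/2$ (error $\sqrt{5/8}<\sqrt{3}/2$) is exact. I verified $\langle T, v_1\otimes v_1\otimes v_2\rangle = 1/(2\sqrt{2})$, so the numbers check out. This buys a fully analytic, certificate-style proof at the cost of a slightly worse (but still sufficient) approximation error than the paper's numerical witness. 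One small point to make explicit if you intend the statement over $\mathbb{F}=\mathbb{C}$: your one-angle parameterization covers only real unit vectors, so you should add the observation (as the paper does) that $\abs{p(v)}\leq 3\abs{v_1}^2\abs{v_2}+2\abs{v_2}^3$ with the orthonormality of $v,w$ in $\mathbb{C}^2$ forcing $(\abs{w_1},\abs{w_2})=(\abs{v_2},\abs{v_1})$, which reduces the complex upper bound to the real one.
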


\begin{proof}
We will produce a strongly orthogonal tensor with lower approximation error than the optimal symmetric strongly orthogonal approximation. We first consider symmetric strongly orthogonal approximations. Since no three non-zero vectors in $\mathbb{F}^2$ can be mutually orthogonal, any $S \in \mathcal{SON}_3\cap S^3(\mathbb{F}^2) $ is either of the form (i) $\sigma \left( w\otimes v \otimes v + v \otimes w \otimes v + v\otimes v \otimes w \right)$ or (ii) $\sigma_1 v^{\otimes 2} + \sigma_2 w^{\otimes 2}$ for $v \perp w$, by Theorem~\ref{thm:symrank3}. For the form (i), we write $ v = \left[ v_1, v_2 \right]^T$, and can then put $ w = e^{i\phi}\left[  -v_2, v_1 \right]^T$ for some phase $\phi$. We have
\begin{align}
\abs{\langle T, w\otimes v \otimes v \rangle}^2 = \abs{(v_1^2 - v_2^2)v_1 + (-v_2v_1 + 2v_2v_1)v_2}^2 = \abs{v_1}^6,
\end{align}
with maximum achieved for $v = e^{i\varphi} e_1$ for some phase $\varphi$. The remaining terms $\abs{\langle T, v\otimes w \otimes v \rangle}^2, \abs{\langle T, v\otimes v \otimes w \rangle}^2$ also achieve their maxima for this $v$, so the approximation is $S = \langle T, w\otimes v \otimes v \rangle w\otimes v \otimes v + \langle T, v\otimes w \otimes v \rangle v\otimes w \otimes v + 
\langle T, v\otimes v \otimes w \rangle v\otimes v \otimes w  = e_2 \otimes e_1 \otimes e_1 +  e_1 \otimes e_2 \otimes e_1 + e_1 \otimes e_1 \otimes e_2$ with approximation error $\|T-S\| = 2$.

For the form (ii), if $w \neq 0$, we can write $v = \left[ v_1, v_2 \right]^T$, $ w = e^{i\phi}\left[  -v_2, v_1 \right]^T$ and obtain
\begin{equation}
\begin{split}
\abs{\langle T, v^{\otimes 3}\rangle}^2 + \abs{\langle T, w^{\otimes 3}\rangle}^2 = \left(\abs{v_2}\abs{3v_1^2 + 2v_2^2}\right)^2 +  \left(\abs{v_1}\abs{3v_2^2 + 2v_1^2}\right)^2 \\
\leq \abs{v_2}^2\left( 3\abs{v_1}^2 + 2\abs{v_2}^2 \right)^2 + \abs{v_1}^2\left( 3\abs{v_2}^2 + 2\abs{v_1}^2 \right)^2
\end{split}
\end{equation}
with equality precisely when $v_1$ and $v_2$ have the same complex phase. This is maximized for $\abs{v_1} = \frac{1}{\sqrt{2}}$, and the resulting approximation
\begin{equation}
S_1 = \left(
\begin{array}{cc| c c} 
0 & \frac{5}{4} & \frac{5}{4} & 0 \\ \frac{5}{4} & 0 & 0 & \frac{5}{4}
\end{array}
\right)
\end{equation}
has approximation error $\|T-S_1\| = \frac{\sqrt{3}}{2}$. If $w = 0$, the maximization of $\abs{\langle T, v^{\otimes 3}\rangle}^2 = \left(\abs{v_2}\abs{3v_1^2 + 2v_2^2}\right)^2$ gives $v = e_2$ with resulting approximation
\begin{equation}
S_2 = \left(
\begin{array}{cc| c c} 
0 & 0 & 0 & 0 \\ 0 & 0 & 0 & 2
\end{array}
\right)
\end{equation}
and approximation error $\| T-S_2\| = \sqrt{3}$. However, defining $a = \frac{1}{\sqrt{2}}[1 ,1]^T$, $b = [0.8321 , 0.5547]^T$, $c = \frac{1}{\sqrt{2}}[1 ,1]^T$ and $a_{\perp} = [-a_2, a_1]^T$ and likewise for $b_{\perp}, c_{\perp}$, the strongly orthogonal approximation $S_3 = \langle T, a\otimes b\otimes c\rangle a\otimes b\otimes c + \langle T, a_{\perp}\otimes b_{\perp}\otimes c_{\perp} \rangle a_{\perp}\otimes b_{\perp}\otimes c_{\perp}+ \langle T, a\otimes b_{\perp}\otimes c \rangle a\otimes b_{\perp}\otimes c $ evaluates numerically to
\begin{equation}
S_3 = \left(
\begin{array}{cc| c c} 
0 & 1.5001 & 1.0001 & 0 \\ 1.0001 & 0 & 0 &1.5001
\end{array}
\right)
\end{equation}
with approximation error $ \|T-S_3\|  \approx 0.7071 < \min(\frac{\sqrt{3}}{2}, \sqrt{3}, 2)$. Since $S_3$ is not symmetric, this concludes the proof.
\end{proof}

\begin{theorem}\label{thm:no_ON}
The symmetric tensor $T \in S^4(\mathbb{R}^2)$ defined by
\begin{equation}
T = e_1 \otimes e_1 \otimes e_1 \otimes e_2 +  e_1 \otimes e_1 \otimes e_2 \otimes e_1 +  e_1 \otimes e_2 \otimes e_1 \otimes e_1 +  e_2 \otimes e_1 \otimes e_1 \otimes e_1
\end{equation}
has no optimal orthogonal, strongly orthogonal or partially orthogonal rank-$2$ approximation that is symmetric.
\end{theorem}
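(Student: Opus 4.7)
The plan is to show, in each of $\mathcal{ON}_2$, $\mathcal{SON}_2$, and $\mathcal{PCON}_{2,P}$, that the best attainable symmetric approximation error is exactly $\sqrt{2}$, then to exhibit a single explicit non-symmetric tensor lying in all three classes with strictly smaller error. By Proposition~\ref{prop:max2}, each such claim reduces to computing $\sum_k|\langle T, Y_k\rangle|^2$ over appropriate unit rank-one $Y_k$, and the starting observation is that $\langle T, v^{\otimes 4}\rangle = 4 v_1^3 v_2$ for any unit $v = (v_1, v_2) \in \mathbb{R}^2$.

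For the symmetric upper bound, Theorem~\ref{thm:symrank2} (combined with the inclusion $\mathcal{PCON}_{2,P} \subseteq \mathcal{ON}_2$) forces any symmetric tensor in any of the three classes to have the form $\sigma_1 v_1^{\otimes 4} + \sigma_2 v_2^{\otimes 4}$ with $v_1 \perp v_2$ unit. Parameterising $v_1 = (\cos\theta, \sin\theta)$ and $v_2 = (-\sin\theta, \cos\theta)$, the quantity to maximise becomes
\[
\langle T, v_1^{\otimes 4}\rangle^2 + \langle T, v_2^{\otimes 4}\rangle^2 = 16 \sin^2\theta \cos^2\theta (\cos^4\theta + \sin^4\theta) = 16 s (1 - 2s),
\]
where $s = \sin^2\theta \cos^2\theta \in [0, 1/4]$, maximised at $s = 1/4$ with value $2$. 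Hence the best symmetric approximation error is $\sqrt{\|T\|^2 - 2} = \sqrt{2}$.

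For the counterexample, set $a = (e_1 + e_2)/\sqrt{2}$ and $a^{\perp} = (-e_1 + e_2)/\sqrt{2}$, and define $Y_1 = a^{\otimes 3} \otimes e_1$ and $Y_2 = (a^{\perp})^{\otimes 3} \otimes e_1$. In the first three positions the factors of $Y_1$ and $Y_2$ are orthogonal, while in the fourth position they are identical; the pair is therefore simultaneously orthogonal, strongly orthogonal, and $P$-partially orthogonal for every non-empty $P \subseteq \{1, 2, 3\}$. A direct calculation gives $\langle T, Y_k\rangle = 3 a_1^2 a_2 = 3/(2\sqrt{2})$ for each $k$, hence $|\langle T, Y_1\rangle|^2 + |\langle T, Y_2\rangle|^2 = 9/4 > 2$, so the associated approximation has error $\sqrt{4 - 9/4} = \sqrt{7}/2 < \sqrt{2}$. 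The remaining partially orthogonal cases with $4 \in P$ and $|P| \leq 3$ reduce to these by the observation preceding Theorem~\ref{thm:mainn2partial} that for symmetric $T$ the $\mathcal{PCON}_{2,P}$ error depends only on $|P|$.

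The main obstacle is identifying the right asymmetric form. Natural attempts in which each term of the approximation is itself a fourth tensor power (possibly in different directions for the two terms) cap out at the symmetric bound $2$; what is needed is to let the four tensor dimensions play genuinely asymmetric roles, with three of them carrying the orthogonal pair $\{a, a^{\perp}\}$ that provides orthogonality of the two rank-one terms, while the fourth dimension is fixed at $e_1$ in both terms so as to maximise alignment with the three-$e_1$-one-$e_2$ support of $T$.
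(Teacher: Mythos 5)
Your proof is correct and follows the same overall strategy as the paper's: invoke Theorem~\ref{thm:symrank2} to reduce every symmetric candidate in $\mathcal{ON}_2$, $\mathcal{SON}_2$, or $\mathcal{PCON}_{2,P}$ to the form $\sigma_1 v^{\otimes 4}+\sigma_2 w^{\otimes 4}$ with $v\perp w$, show the best such approximation has error $\sqrt{2}$, and beat it with a rank-one-factor-shared counterexample of error $\sqrt{7}/2$. Your counterexample $a^{\otimes 3}\otimes e_1+(a^{\perp})^{\otimes 3}\otimes e_1$ is the paper's $\tfrac{3}{\sqrt{8}}e_1\otimes u^{\otimes 3}+\tfrac{3}{\sqrt{8}}e_1\otimes v^{\otimes 3}$ up to a permutation of modes. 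The one genuine difference is how the $\sqrt{2}$ bound is obtained: the paper computes the optimal $\mathcal{CON}_2$ approximation numerically via the semidefinite relaxation of Section~\ref{sec:SDP}, whereas you maximize $16s(1-2s)$ over $s=\sin^2\theta\cos^2\theta\in[0,1/4]$ in closed form. Your route is self-contained and removes the reliance on a numerical certificate, at the cost of bounding only the symmetric candidates rather than identifying the full $\mathcal{CON}_2$ optimum (which is all the theorem needs). You are also more explicit than the paper about the partially orthogonal cases with $4\in P$, correctly reducing them to $P\subseteq\{1,2,3\}$ via the permutation-invariance observation preceding Theorem~\ref{thm:mainn2partial}; note only that $P=\{1,2,3,4\}$ is genuinely excluded, since then $\mathcal{PCON}_{2,P}=\mathcal{CON}_2$ and the optimizer is symmetric.
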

\begin{proof}
Because of Theorem~\ref{thm:symrank2}, it will be enough to produce a strongly orthogonal tensor of rank no greater than $2$ with strictly lower approximation error than the optimal completely orthogonal rank-$2$ approximation. The procedure in Section~\ref{sec:SDP} gives the optimal completely orthogonal rank-$2$ approximation
\begin{equation}
Y_s = -y_1^{\otimes 4} + y_2^{\otimes 4},
\end{equation}
where $y_1 = \frac{1}{\sqrt{2}} [1,-1]^T$, $y_2 = \frac{1}{\sqrt{2}} [-1,-1]^T$ with approximation error $\|T - Y_s\| = \sqrt{2}$. However, the strongly orthogonal tensor
\begin{equation}
Y = \frac{3}{\sqrt{8}}e_1\otimes u^{\otimes 3} + \frac{3}{\sqrt{8}}e_1\otimes v^{\otimes 3},
\end{equation}
with $u = \frac{1}{\sqrt{2}}\bigl[ -1, 1 \bigr]^T, v = \frac{1}{\sqrt{2}} \bigl[  1,  1 \bigr]^T $ has approximation error $\|T - Y\| =\sqrt{\frac{7}{4}} < \sqrt{2}$. Since $Y$ is not symmetric, this concludes the proof.
\end{proof}
\subsection{Additional differences from the matrix case}\label{sec:examples}
This section gives three additional examples of properties of orthogonal approximations that differ starkly between the matrix case and the general tensor case.

As is well known, the Eckart-Young theorem states that the optimal rank-$r$ (orthogonal) approximation of a matrix $A$ is given by $\sum_{k=1}^r \sigma_k u_kv_k^T$, where $\sigma_1 \geq \sigma_2 \geq \ldots \geq 0$ are the singular vectors of $A$ and $u_k, v_k$ are the left and right singular vectors, respectively. By the min-max theorem, these can be computed by successive rank-one deflations as in Equation~\eqref{eq:deflateON}. Theorem~\ref{thm:main} implies that this is not true in general, for $n \geq 3$, since rank-$1$ approximations can be chosen symmetric, meaning that the successive deflations can all be taken symmetric. Since the optimal orthogonal approximation is not always symmetric, they cannot coincide, in general. The following example shows the same statement for $n = 2$.

\begin{example}
Let $T \in S^3({\mathbb{R}^2})$ be the tensor defined by
\begin{equation}\label{eq:Tex}
T = \left(
\begin{array}{cc| c c} 
0 & 1 &1 & 0 \\ 1 & 0 & 0 &2
\end{array}
\right)
\end{equation}
Since $n=2$, the optimal completely orthogonal approximation can be taken symmetric by Theorem~\eqref{thm:mainn2}. We compare the optimal orthogonal rank-$2$ approximation to the result of the successive orthogonal deflation in Equation~\eqref{eq:deflateON}. As shown in Theorem~\ref{thm:no_SON}, the optimal rank-$2$ completely orthogonal approximation is given by 
\begin{equation}
S_1 = \left(
\begin{array}{cc| c c} 
0 & \frac{5}{4} &\frac{5}{4} & 0 \\ \frac{5}{4}  & 0 & 0 &\frac{5}{4}
\end{array}
\right),
\end{equation}
and the optimal rank-$1$ approximation is
\begin{equation}
S_2= \left(
\begin{array}{cc| c c} 
0 & 0 & 0 & 0 \\ 0  & 0 & 0 & 2
\end{array}
\right).
\end{equation}
Performing a successive orthogonal deflation as in Equation~\eqref{eq:deflateON} results in the second term being the zero tensor. Since $\| T - S_1\| < \|T  - S_2\|$, the two-fold orthogonal deflation gives larger approximation error than the optimal rank-$2$ completely orthogonal approximation.
\end{example}

A vector $x\in \mathbb{F}^n$ is a singular vector \cite{lim2006singular,qi2005eigenvalues} of the symmetric tensor $T\in\mathbb{F}^{n\times \ldots \times n}$ with singular value $\sigma$ if $T\times_2 x \times \ldots \times_d x = \sigma x$. For $r=1$, the solution $x$ to Equation~\eqref{eq:max1} is known to be a singular vector with $T\times_2 x \times \ldots \times_d x = \langle T, x\otimes \ldots \otimes x\rangle x$. This is not the case for $r \geq 2$, as we now show.
\begin{example}Let $T$ be as in Equation~\eqref{eq:Tex}. As shown in Theorem~\ref{thm:no_SON}, the optimal rank-$2$ completely orthogonal approximation is given by the terms $\frac{5}{2\sqrt{2}} v\otimes v \otimes v + \frac{5}{2\sqrt{2}} w\otimes w \otimes w$, where $v = \frac{1}{\sqrt{2}} [1, 1]^T, w = \frac{1}{\sqrt{2}} [-1, 1]^T$, so $T\times_2 v \times \ldots \times_d v = [1, \frac{3}{2}]^T$, which is not a multiple of $v$.
\end{example}

Lastly, we present an example where the optimal completely orthogonal approximations coincide for $r=2$ and $r=3$, but without being equal to the approximated tensor $T$. This situation is unique to tensors with dimension $d \geq 2$ and does not occur in the matrix case $d=2$.
\begin{example}
We consider the tensor $T\in S^3(\mathbb{R}^3)$ defined by $T = e_1\otimes e_1 \otimes e_2 + e_1\otimes e_2 \otimes e_1 + e_2\otimes e_1 \otimes e_1$. The procedure in Section~\ref{sec:SDP} results in an optimal rank-$2$ completely orthogonal approximation $Y$ with terms $ v\otimes v \otimes v$ and $w\otimes w \otimes w$, where $v =  \frac{1}{\sqrt{2}} [1,1,0]^T$ and $w = \frac{1}{\sqrt{2}} [-1,1,0]$, respectively, to machine precision. Using the same procedure shows that this coincides with the optimal rank-$3$ approximation, up to machine precision, and the approximation error is $\frac{\sqrt{3}}{2}$, i.e., non-zero.
\end{example}

\subsection{NP-hardness of optimal rank-$r$ (strongly, completely, partially) orthogonal approximation}\label{sec:NP}
For tensors of dimension $d > 2$, it is well-known that finding the optimal rank-one approximation over $\mathbb{F}$ is NP-hard in general for $\mathbb{F} = \mathbb{R}$ and $d\geq 3$ \cite{hillar2013most}, or $\mathbb{F} = \mathbb{C}$ and $d \geq 4$  \cite{friedland2018nuclear}. However, this does not immediately translate into the corresponding result for $r > 1$, since we showed in Section~\ref{sec:examples} that the optimal rank-$r$ approximation is in general unrelated to the optimal rank-one approximation. This section therefore constructs a straight-forward polynomial-time reduction from finding the optimal rank-$r$ approximation to finding the optimal rank-one approximation, which shows that the situation is NP-hard for any $r \geq 1$ and $d\geq 3$ for $\mathbb{F} = \mathbb{R}$, or $d \geq 4$ for $\mathbb{F} = \mathbb{C}$.

Given an integer $r$ and a vector $v\in \mathbb{F}^n$, define $\mathcal{B}_k(v) \in  \mathbb{F}^{rn }$ by
\begin{equation}
\mathcal{B}_\ell(v)(i) = \begin{cases}
v(i), & (\ell-1)n < i \leq \ell n, \\
0, & \text{otherwise}.
\end{cases}
\end{equation}
For a tensor $T = \sum_{k=1}^r \sigma_k \bigotimes_{j=1}^d v_{kj}$, define $\mathcal{B}_\ell(T) = \sum_{k=1}^{r} \sigma_k \bigotimes_{j=1}^d \mathcal{B}_\ell(v_{kj})$. A block-diagonal tensor with copies of $T$ on the diagonal is then given by $\mathcal{B}_1(T) + \ldots + \mathcal{B}_r(T)$. The following is a straightforward calculation.

\begin{proposition}\label{prop:blockr}
An optimal (strongly, completely, partially) orthogonal rank-$r$ decomposition of $\mathcal{B}_1(T) + \ldots + \mathcal{B}_r(T)$ is $\mathcal{B}_1(u) + \ldots + \mathcal{B}_r(u)$, where $u$ is the optimal rank-$1$ approximation of $T$.
\end{proposition}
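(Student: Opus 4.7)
The plan is to show that on the one hand $Y^\star := \mathcal{B}_1(u) + \ldots + \mathcal{B}_r(u)$ is a completely orthogonal rank-$r$ decomposition (hence valid for every notion $\mathcal{A}_r \in \{\mathcal{ON}_r, \mathcal{SON}_r, \mathcal{CON}_r, \mathcal{PCON}_{r,P}\}$), and on the other hand that its objective value attains the universal upper bound $r\|T\|_{\sigma,\mathbb{F}}^2$ that holds for any of these orthogonality notions. The first half is essentially by construction: the supports of $\mathcal{B}_\ell(v)$ and $\mathcal{B}_{\ell'}(v)$ along each mode are disjoint blocks of size $n$ in $\mathbb{F}^{rn}$ for $\ell \neq \ell'$, so the rank-one factors are mode-wise orthogonal, giving complete orthogonality (and thus strong, partial, and ordinary orthogonality). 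Writing the optimal rank-one approximation as $u = \|T\|_{\sigma,\mathbb{F}} v_1\otimes \ldots \otimes v_d$ with unit factors, we obtain $\langle \tilde T, \mathcal{B}_\ell(u)/\|u\|\rangle = \langle T, u/\|u\|\rangle = \|T\|_{\sigma,\mathbb{F}}$, where $\tilde T := \mathcal{B}_1(T) + \ldots + \mathcal{B}_r(T)$.

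For the upper bound, Proposition~\ref{prop:max2} reduces each of the four approximation problems to maximizing $\sum_{k=1}^r |\langle \tilde T, \bigotimes_j v_{kj}\rangle|^2$ over unit rank-one factors satisfying the relevant orthogonality constraint; in particular each term is bounded by $\|\tilde T\|_{\sigma,\mathbb{F}}^2$, so the sum is bounded by $r\|\tilde T\|_{\sigma,\mathbb{F}}^2$. It therefore suffices to establish the spectral-norm identity $\|\tilde T\|_{\sigma,\mathbb{F}} = \|T\|_{\sigma,\mathbb{F}}$. The inequality $\geq$ is immediate by taking test vectors supported in a single block. For the reverse inequality, given unit vectors $x_j \in \mathbb{F}^{rn}$, set $a_{j,\ell} = \|(x_j)_\ell\|$, so $\sum_\ell a_{j,\ell}^2 = 1$ and each $a_{j,\ell} \in [0,1]$, and bound
\begin{equation*}
|\langle \tilde T, \bigotimes_{j=1}^d x_j\rangle| = \Big|\sum_{\ell=1}^r \langle T, \bigotimes_{j=1}^d (x_j)_\ell\rangle\Big| \leq \|T\|_{\sigma,\mathbb{F}} \sum_{\ell=1}^r \prod_{j=1}^d a_{j,\ell}.
\end{equation*}
AM-GM on the $d$ factors and the bound $a_{j,\ell}^d \leq a_{j,\ell}^2$ (which uses $d\geq 2$) give
\begin{equation*}
\sum_{\ell=1}^r \prod_{j=1}^d a_{j,\ell} \leq \frac{1}{d}\sum_{\ell=1}^r \sum_{j=1}^d a_{j,\ell}^d \leq \frac{1}{d}\sum_{j=1}^d \sum_{\ell=1}^r a_{j,\ell}^2 = 1,
\end{equation*}
which completes the norm identity.

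Combining the two halves, the block-diagonal choice $Y_k^\star = \mathcal{B}_k(u/\|u\|)$ achieves $|\langle \tilde T, Y_k^\star\rangle|^2 = \|T\|_{\sigma,\mathbb{F}}^2$ in every summand, so $Y^\star$ saturates the universal upper bound $r\|T\|_{\sigma,\mathbb{F}}^2$ valid for every $\mathcal{A}_r$ and is therefore optimal for all four notions simultaneously. The main obstacle in this plan is the spectral-norm equality: everything else is bookkeeping around Proposition~\ref{prop:max2}, while the bound $\sum_\ell \prod_j a_{j,\ell} \leq 1$ subject to $\sum_\ell a_{j,\ell}^2=1$ is what genuinely prevents a test tensor from spreading mass across blocks to exceed $\|T\|_{\sigma,\mathbb{F}}$. (As a side remark, this same identity also yields the strict inequalities $\|T\|_{\mathcal{A}_r,\mathbb{R}}\neq \|T\|_{\mathcal{A}_r,\mathbb{C}}$ asserted after Proposition~\ref{prop:max2} by lifting any real tensor with $\|T\|_{\sigma,\mathbb{R}}\neq \|T\|_{\sigma,\mathbb{C}}$ via the block construction.)
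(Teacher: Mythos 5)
Your proof is correct and follows essentially the same route as the paper's: the heart of both arguments is the bound $\sum_{\ell}\prod_{j}\|v_{kj,(\ell)}\|\leq 1$ obtained from AM--GM together with $a^{d}\leq a^{2}$ for $a\in[0,1]$, plus the observation that the block-diagonal lift of the optimal rank-one approximation is completely orthogonal and saturates the bound. The only cosmetic difference is that you package the estimate as the spectral-norm identity $\|\mathcal{B}_1(T)+\ldots+\mathcal{B}_r(T)\|_{\sigma,\mathbb{F}}=\|T\|_{\sigma,\mathbb{F}}$ and then bound each of the $r$ summands by it, whereas the paper applies the same inequality term by term to an arbitrary admissible family directly.
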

\begin{proof}
Let $S = \sum_{k=1}^r \sigma_k \bigotimes_{j=1}^d v_{kj}$ be any sum of (strongly, completely, partially) orthogonal tensors with $v_{kj} \in \mathbb{F}^{rn_j}$ and $\|v_{kj}\| = 1$. Write $v_{kj}^T = \left[ v_{kj, (1)}^T \ldots v_{kj,(r)}^T\right]$ for $v_{kj,(\ell)} \in \mathbb{F}^{n_j}$. Then for fixed $k$
\begin{equation}\label{eq:blockmax}
\abs{ \langle \mathcal{B}_1(T) + \ldots + \mathcal{B}_r(T),  \bigotimes_{j=1}^d v_{kj}\rangle }= \abs{ \sum_{\ell=1}^r \langle T,  \bigotimes_{j=1}^d v_{kj, (\ell)}\rangle }\leq  \abs{ \langle T,  u \rangle} \sum_{\ell=1}^r  \prod_{j=1}^d \|v_{kj, (\ell)}\|.
\end{equation}
By the AM-GM inequality, we see that
\begin{equation}
 \sum_{\ell=1}^r  \prod_{j=1}^d \|v_{kj, (\ell)}\| \leq \sum_{\ell = 1}^r \frac{1}{d} \sum_{j=1}^d  \|v_{kj, (\ell)}\|^2 = \frac{1}{d} \sum_{j=1}^d \sum_{\ell = 1}^r   \|v_{kj, (\ell)}\|^2 = 1,
\end{equation}
since $\sum_{\ell=1}^r \|v_{kj, (\ell)}\|^{2} = \|v_{kj}\|^{2} = 1$, for every $j$. Inserting this into Equation~\eqref{eq:blockmax} shows that
\begin{equation}
\sum_{k=1}^r \abs{ \langle T^{(r)},  \bigotimes_{j=1}^d v_{kj}\rangle}^2 \leq r \abs{\langle T,  u \rangle}^2,
\end{equation}
and this equality is clearly achieved with the choice $\bigotimes_{j=1}^d v_{kj} = \mathcal{B}_{k}(u)$ for every $k$. Since this results in a completely orthogonal decomposition, this concludes the proof.
\end{proof}

\begin{theorem}
For $T\in \mathbb{F}^{n_1\times \ldots \times n_d}$ and any $r \geq 1$, It is in general NP-hard to approximate any of $\|T\|_{\mathcal{ON}_r, \mathbb{F}}$, $\|T\|_{\mathcal{SON}_r, \mathbb{F}}$, $\|T\|_{\mathcal{CON}_r, \mathbb{F}}$, $\|T\|_{\mathcal{PCON}_{r,P}, \mathbb{F}}$ to arbitrary accuracy for $d\geq 3$ if $\mathbb{F} = \mathbb{R}$, and $d \geq 4$ if $\mathbb{F} = \mathbb{C}$. For real-valued symmetric tensors $T \in S^d(\mathbb{R}^n)$, computing any of $\|T\|_{\mathcal{ON}_r, \mathbb{F}}$, $\|T\|_{\mathcal{SON}_r, \mathbb{F}}$, $\|T\|_{\mathcal{CON}_r, \mathbb{F}}$, $\|T\|_{\mathcal{PCON}_{r,P}, \mathbb{F}}$ is also NP-hard.
\end{theorem}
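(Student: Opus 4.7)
The plan is to reduce the case $r \geq 2$ to the known NP-hardness of the $r=1$ case (spectral norm computation) via the block-diagonal construction just introduced, using Proposition~\ref{prop:blockr} as the central tool. Given an arbitrary $T \in \mathbb{F}^{n_1 \times \ldots \times n_d}$, I form the block-diagonal tensor $T^{(r)} := \mathcal{B}_1(T) + \ldots + \mathcal{B}_r(T) \in \mathbb{F}^{rn_1 \times \ldots \times rn_d}$, which is constructible from $T$ in time polynomial in the size of $T$ and in $r$. Proposition~\ref{prop:blockr}, combined with Proposition~\ref{prop:max2}, yields the identity
\begin{equation}
\|T^{(r)}\|_{\mathcal{A}_r, \mathbb{F}}^2 \;=\; \max_{\{v_{kj}\} \in \mathcal{A}_r, \|v_{kj}\|=1} \sum_{k=1}^r \bigl| \langle T^{(r)}, \textstyle\bigotimes_{j=1}^d v_{kj} \rangle \bigr|^2 \;=\; r\, \|T\|_{\sigma, \mathbb{F}}^2,
\end{equation}
valid uniformly for $\mathcal{A}_r \in \{ \mathcal{ON}_r, \mathcal{SON}_r, \mathcal{CON}_r, \mathcal{PCON}_{r,P}\}$. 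Consequently, any algorithm approximating $\|T^{(r)}\|_{\mathcal{A}_r, \mathbb{F}}$ to accuracy $\varepsilon$ yields an approximation of $\|T\|_{\sigma,\mathbb{F}}$ to accuracy $\varepsilon/\sqrt{r}$.

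Since approximating the spectral norm $\|T\|_{\sigma, \mathbb{F}}$ is NP-hard for $\mathbb{F} = \mathbb{R}$, $d \geq 3$ by Hillar and Lim \cite{hillar2013most} and for $\mathbb{F} = \mathbb{C}$, $d \geq 4$ by Friedland and Lim, and the reduction $T \mapsto T^{(r)}$ is polynomial-time and does not inflate $d$, the NP-hardness transfers to $\|\cdot\|_{\mathcal{A}_r, \mathbb{F}}$ for every $r \geq 1$ and each of the four orthogonality notions.

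For the symmetric case, I would verify that the reduction preserves symmetry. If $T \in S^d(\mathbb{R}^n)$, then each $\mathcal{B}_\ell(T)$ has support contained in the diagonal block of indices $\{(\ell-1)n+1, \ldots, \ell n\}^d$ and, on that block, agrees with $T$ after an index shift; hence each $\mathcal{B}_\ell(T)$ is symmetric, and so is $T^{(r)} \in S^d(\mathbb{R}^{rn})$. The same identity above then reduces the problem of approximating $\|T^{(r)}\|_{\mathcal{A}_r, \mathbb{R}}$ for symmetric $T^{(r)}$ to approximating $\|T\|_{\sigma, \mathbb{R}}$ for a symmetric tensor $T$. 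Since the Hillar--Lim hardness reduction for $d \geq 3$ produces symmetric tensors (computing the spectral norm of symmetric real tensors of order $d \geq 3$ is NP-hard, as shown there), the claimed NP-hardness in the symmetric setting follows.

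The only potentially delicate point is ensuring that Proposition~\ref{prop:blockr} indeed applies uniformly across all four orthogonality types with the same tight constant: inspecting its proof, the upper bound uses only Cauchy--Schwarz and AM--GM and hence holds over any superset of $\mathcal{CON}_r$, while the matching lower bound is attained by $\mathcal{B}_1(u) + \ldots + \mathcal{B}_r(u)$, which is simultaneously in $\mathcal{CON}_r \subseteq \mathcal{SON}_r \subseteq \mathcal{ON}_r$ and in $\mathcal{PCON}_{r,P}$. This uniformity is what makes a single reduction suffice for all four norms. The remaining work is purely bookkeeping and requires no further combinatorial argument.
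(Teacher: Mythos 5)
Your proposal is correct and follows essentially the same route as the paper: construct the block-diagonal tensor $\mathcal{B}_1(T) + \ldots + \mathcal{B}_r(T)$, invoke Proposition~\ref{prop:blockr} to identify its rank-$r$ orthogonal norm (in all four senses) with $\sqrt{r}\,\|T\|_{\sigma,\mathbb{F}}$, and transfer the known rank-one hardness results, noting that the construction preserves symmetry for the symmetric case. The explicit $\varepsilon/\sqrt{r}$ accuracy bookkeeping and the uniformity check across the four orthogonality notions are welcome details the paper leaves implicit, but the argument is the same.
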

\begin{proof}
Since $r \leq \min_j(n_j)$, $\mathcal{B}_1(T) + \ldots + \mathcal{B}_r(T)$ has dimensions bounded by $n^2$, so Proposition~\ref{prop:blockr} gives a polynomial reduction from the problem of approximating $\|\mathcal{B}_1(T) + \ldots + \mathcal{B}_r(T)\|_{\mathcal{ON}_r, \mathbb{F}}$ to approximating $\|T\|_{\sigma, \mathbb{F}}$, which is NP-hard in general (\cite[Theorem~1.11]{hillar2013most} for $\mathbb{R}$ and \cite[Corollary~8.7]{friedland2018nuclear} for $\mathbb{C}$). This shows the first statement. For the second, note that $\mathcal{B}_1(T) + \ldots + \mathcal{B}_r(T)$ is symmetric when $T$ is symmetric and apply Theorem~10.2 in \cite{hillar2013most}.
\end{proof}

\section{Conclusions}
We have shown that optimal orthogonal approximations to symmetric tensors exhibit significant differences to their counterparts for symmetric matrices. Under any of the notions of tensor orthogonality, we have provided examples where there are no optimal approximations that are symmetric. This is an analogue of Comon's conjecture in the setting of orthogonal approximation, and is different from the matrix case and the rank-$1$ approximation for general tensors. Moreover, we have given examples where the optimal approximations cannot be calculated by an iterative deflation of the optimal rank-$1$ approximations and where the approximating terms are not tensor singular values. We have also shown some structural results on symmetric orthogonally decomposable tensors that might be of independent interest.

\bibliographystyle{amsplain}
\bibliography{references}

\end{document}